\newcolumntype{L}[1]{>{\raggedright\let\newline\\\arraybackslash\hspace{0pt}}m{#1}}
\newcolumntype{C}[1]{>{\centering\let\newline\\\arraybackslash\hspace{0pt}}m{#1}}
\newcolumntype{R}[1]{>{\raggedleft\let\newline\\\arraybackslash\hspace{0pt}}m{#1}}
\newtheorem{theorem}{Theorem}
\newtheorem{proposition}[theorem]{Proposition}
\theoremstyle{definition}
\newtheorem{example}[theorem]{Example}
\theoremstyle{lemma}
\newtheorem{lemma}[theorem]{Lemma}
\theoremstyle{remark}
\newtheorem{remark}[theorem]{Remark}
\newtheorem{assumption}[theorem]{Assumption}
\Crefname{assumption}{Assumption}{Assumptions}
\numberwithin{theorem}{section}
\numberwithin{equation}{section}
\numberwithin{table}{section}
\numberwithin{figure}{section}
\definecolor{myBlue}{RGB}{30,144,255} 
\definecolor{myGreen}{RGB}{69,169,0} 
\definecolor{myRed}{RGB}{165,12,42} 
\definecolor{myOrange}{RGB}{225,92,22} 
\definecolor{color0}{rgb}{0.12156862745098,0.466666666666667,0.705882352941177}
\definecolor{color1}{rgb}{1,0.498039215686275,0.0549019607843137}
\definecolor{color2}{rgb}{0.172549019607843,0.627450980392157,0.172549019607843}
\definecolor{color3}{rgb}{0.83921568627451,0.152941176470588,0.156862745098039}
\definecolor{color4}{rgb}{0.580392156862745,0.403921568627451,0.741176470588235}
\definecolor{color5}{rgb}{0,0,0}
\newcommand{\delete}[1]{ }
\def\N{\mathbb{N}}
\def\R{\mathbb{R}}
\newcommand\ds{\,\mathrm{d}s}
\newcommand\dt{\,\mathrm{d}t}
\newcommand\dx{\,\mathrm{d}x}
\newcommand{\tddt}{\ensuremath{\tfrac{\mathrm{d}}{\mathrm{d}t}} }
\newcommand{\tdds}{\ensuremath{\tfrac{\mathrm{d}}{\mathrm{d}s}} }
\newcommand{\hook}{\ensuremath{\hookrightarrow}}
\DeclareMathOperator{\id}{id}
\newcommand{\A}{\ensuremath{\mathcal{A}} }
\newcommand{\calA}{\ensuremath{\mathcal{A}} }
\newcommand{\calB}{\ensuremath{\mathcal{B}} }
\newcommand{\calC}{\ensuremath{\mathcal{C}} }
\newcommand{\calD}{\ensuremath{\mathcal{D}} }
\newcommand{\cHV}{\ensuremath{{\mathcal{H}_{\V}}} }
\newcommand{\cHQ}{\ensuremath{{\mathcal{H}_{\Q}}} }
\newcommand{\Q}{\ensuremath{\mathcal{Q}} }
\newcommand{\calQ}{\ensuremath{\mathcal{Q}} }
\newcommand{\V}{\ensuremath{\mathcal{V}}}
\newcommand{\calV}{\ensuremath{\mathcal{V}}}
\newcommand{\calX}{\ensuremath{\mathcal{X}} }
\newcommand{\f}{\ensuremath{f}}
\newcommand{\g}{\ensuremath{g}}
\newcommand{\proj}{R}
\newcommand{\projInd}[1]{\ensuremath{\proj}_{#1}}
\newcommand{\Ru}{\projInd{a}}
\newcommand{\Rp}{\projInd{b}}
\newcommand{\CQtoH}{\ensuremath{{C_{\scalebox{.5}{\Q \hook \cHQ}}}} }
\newcommand{\CQtoHsquare}{\ensuremath{{C^2_{\scalebox{.5}{\Q \hook \cHQ}}}} }
\newcommand{\D}{D_\tau}
\DeclareFontFamily{U}{matha}{\hyphenchar\font45}
\DeclareFontShape{U}{matha}{m}{n}{
	<-6> matha5 <6-7> matha6 <7-8> matha7
	<8-9> matha8 <9-10> matha9
	<10-12> matha10 <12-> matha12
}{}
\DeclareSymbolFont{matha}{U}{matha}{m}{n}
\DeclareFontFamily{U}{mathx}{\hyphenchar\font45}
\DeclareFontShape{U}{mathx}{m}{n}{
	<-6> mathx5 <6-7> mathx6 <7-8> mathx7
	<8-9> mathx8 <9-10> mathx9
	<10-12> mathx10 <12-> mathx12
}{}
\DeclareSymbolFont{mathx}{U}{mathx}{m}{n}
\DeclareMathDelimiter{\vvvert} {0}{matha}{"7E}{mathx}{"17}%
\begin{document}
\title[]{A decoupling and linearizing discretization\\ for weakly coupled poroelasticity\\ with nonlinear permeability}
\author[]{R.~Altmann$^\dagger$, R.~Maier$^\ddagger$}
\address{${}^{\dagger}$ Department of Mathematics, University of Augsburg, Universit\"atsstr.~14, 86159 Augsburg, Germany}
\email{robert.altmann@math.uni-augsburg.de}
\address{${}^{\ddagger}$ Department of Mathematical Sciences, Chalmers University of Technology and University of Gothenburg, 412 96 G\"oteborg, Sweden}
\email{roland.maier@chalmers.se}
%
\date{\today}
\keywords{}
%
%
\begin{abstract}
We analyze a semi-explicit time discretization scheme of first order for poro\-elasticity with nonlinear permeability provided that the elasticity model and the flow equation are only weakly coupled. The approach leads to a decoupling of the equations and, at the same time, linearizes the nonlinearity without the need of further inner iteration steps. Hence, the computational speed-up is twofold without a loss in the convergence rate. We prove optimal first-order error estimates by considering a related delay system and investigate the method numerically for different examples with various types of nonlinear displacement-permeability relations. 
\end{abstract}
%
%
\maketitle
%
{\tiny {\bf Key words.} nonlinear poroelasticity, time discretization, decoupling, linearization}\\
\indent
{\tiny {\bf AMS subject classifications.}  {\bf 65M12}, {\bf 65J15}, {\bf 76S05}} 
%
%
%
\section{Introduction}
In this paper, we consider Biot's quasi-static equations of poroelasticity, which couple a Darcy flow of an incompressible viscous fluid to the linear elastic behavior of the surrounding porous media~\cite{Biot41}. The setup is of particular importance for physical applications, e.g., in the field of geo\-mechanics~\cite{Zob10}, and more recently also found success in connection with medical applications~\cite{TulV11,SobEWC12,VarCTHLTV16}. In this context, the extension of the classical Biot's equations to a multiple-network case (see, e.g., \cite{LeePMR19,HonKLP19}) has also gained increasing interest. 

The poroelastic model relies on an averaging of the pressure and the displacement field on very small volumetric elements and assumes small strains and a quasi-static behavior, i.e., internal equilibria are preserved at any time. 
In the context of a constant permeability, the model is analyzed in~\cite{DetC93,Sho00}. However, depending on the particular model configuration, the permeability may depend on the porosity, as, e.g, investigated by Kozeny~\cite{Koz27} and Carman~\cite{Car37,Car38}. In the context of poroelasticity, the porosity, in turn, depends (in a nonlinear way) on the displacement, which leads to a nonlinear poroelastic model as analyzed in~\cite{CaoCM13} and, in a more general setting, in~\cite{BocGSW16}. 

Due to the present nonlinearity, linearization approaches are required to solve the nonlinear system that occurs in every time step of an implicit time discretization. A prominent example is the application of a Picard-type iteration as used, for instance, in~\cite{CaoCM13,BroV16b,FuCM20}. For strong nonlinearities, where multiple inner steps are required, this leads to relatively expensive computations. 
One important aspect in this context is the coupling of the involved equations. In the context of linear problems, this can be overcome with a suitable decoupling. One approach is the so-called \emph{fixed-stress splitting scheme} considered in~\cite{WheG07,KimTJ11,MikW13}. From a theoretical point of view, the resulting numerical methods require an additional iteration in each time step, see, e.g., \cite{StoBKNR19}. In practice, however, one can restrict the inner iteration to very few steps; cf.~\cite{KimTJ11}. The ideas of a 'fixed stress' have also been used in the context of a nonlinear poroelastic model in~\cite{BorRKN18}. An approach without an inner iteration is presented in~\cite{ChaR18} and is similar to the here considered approach of a time delay, but uses additional stabilization terms in the decoupled equations. 

In this contribution, we investigate the use of a semi-explicit time discretization scheme in the setting where the coupling between the two poroelastic equations is rather small; cf.~the rigorous condition given in Assumption~\ref{ass:weakCoupling}. We refer to this particular configuration as \emph{weak coupling}, which is satisfied in many applications. Note that a semi-explicit discretization is related to the so-called \emph{drained splitting scheme} (see, e.g., \cite{ArmS92,KimTJ11b}) if only one inner iteration step is computed. The advantage of a semi-explicit discretization for the classical (linear) poroelastic model was studied in~\cite{AltMU21}. Therein it was shown that a semi-explicit Euler discretization maintains the first-order convergence and at the same time decouples the system equations. This decoupling of the flow and the elasticity equations can enable a remarkable computational speed-up compared to a classical implicit discretization. 
In the here considered nonlinear setting, this speed-up is further enhanced by the fact that the semi-explicit discretization naturally leads to a linearization of the nonlinear term, which does not require additional inner iterations to handle the nonlinearity. 

Since we are mainly interested in investigating the advantages of using a semi-explicit discretization, we put the theoretical focus on the temporal discretization. This means, in particular, that we present the convergence analysis of the semi-explicit scheme without spatial approximations. Note, however, that the given convergence proof can be extended to the fully discrete setting. For this, two separate Ritz projections with appropriate approximation properties need to be introduced. As we will indicate, the convergence proof then follows in a similar manner. 

The remaining parts of the paper are structured as follows. In Section~\ref{sec:poroNL}, we state the nonlinear poroelasticity model and introduce necessary assumptions. Moreover, several examples of displacement-dependent nonlinear permeabilities are discussed. 
We then investigate the semi-explicit discretization scheme, leading to (semi-discrete) error estimates in~\Cref{sec:discretization}. For this, we introduce a related delay model, where the time delay equals the step size of the temporal discretization. We analyze the semi-explicit scheme based on the theoretical observation that it can be reinterpreted as an implicit discretization of the delay model. First-order convergence can then be shown under the assumption of the weak coupling condition, a sufficiently small step size, and a certain regularity of the solutions to the original and the delay system. 
Finally, we assess the scheme numerically in Section~\ref{sec:num}, where we consider several nonlinear permeability models. \\

\textbf{Notation.} 
Throughout the paper we write~$a \lesssim b$ to indicate that there exists a generic constant~$C$, independent of spatial or temporal discretization parameters, such that~$a \leq C b$. Further, we abbreviate Bochner spaces (especially when considering the norms) on the time interval~$[0,T]$ for a Banach space~$\calX$ by~$L^p(\calX) \vcentcolon = L^p(0,T;\calX)$, $W^{k,p}(\calX) \vcentcolon = W^{k,p}(0,T;\calX)$, and~$H^k(\calX) \vcentcolon = H^k(0,T;\calX)$, $p\ge 1$, $k\in\N$. 
%
%
\section{Nonlinear poroelasticity}\label{sec:poroNL}
Let $D \subseteq \R^d$, $d \in \{2,3\}$, be a domain with Lipschitz boundary $\partial D$. 
Generally, the poroelastic equations seek the pore pressure ${p\colon[0,T] \times D \rightarrow \mathbb{R}}$ and the displacement field $u\colon [0,T] \times D \rightarrow \mathbb{R}^d$ up to a given final time $T>0$ such~that 
\begin{subequations}
\label{eq:poroStrong}
\begin{align}
	-\nabla \cdot \sigma(u) + \nabla (\alpha p) &\;=\; \f\qquad \text{in } (0,T] \times D, \label{eq:poroStrong:a} \\
	\partial_t\Big(\alpha \nabla \cdot u + \frac{1}{M} p \Big)- \nabla \cdot \Big( \frac{\kappa(\nabla\cdot u)}{\nu} \nabla p \Big) &\;=\; \g\qquad \text{in } (0,T] \times D. \label{eq:poroStrong:b}
\end{align}
Here, $\f$ is a volumetric load, $\g$ the fluid source, and $\sigma$ denotes the \emph{stress tensor}. 
The permeability~$\kappa$ is the source of a possible nonlinearity that will be discussed in the following subsections. 
The remaining constants read $\alpha$ (\emph{Biot-Willis fluid-solid coupling coefficient}), $M$ (\emph{Biot modulus}), and $\nu$ (\emph{fluid viscosity}); see~\cite{Biot41,Sho00} for further details. 	
For simplicity, we consider homogeneous Dirichlet boundary conditions, i.e., 
\begin{align}
	u &\;=\; 0 \quad\ \text{on } (0,T] \times \partial D, \label{eqn:bcs:u} \\
	p &\;=\; 0 \quad\ \text{on } (0,T] \times \partial D. \label{eqn:bcs:p}
\end{align}
\end{subequations}
More general inhomogeneous boundary conditions may be incorporated in form of a constraint~\cite{AltMU21b}. 
As initial condition, we have $p(\,\cdot\,,0) = p^0$, which also defines $u(\,\cdot\,,0)$ due to~\eqref{eq:poroStrong:a}. 
%
%
\subsection{Linear model and weak form}\label{sec:poroNL:linear}
If the stress-strain constitutive law given by~$\sigma$ is linear and the permeability~$\kappa$ is constant, then system~\eqref{eq:poroStrong} is called \emph{linear poroelasticity}. In this case, $\sigma$ has the form 
\begin{equation*}
  \sigma(u) 
  \coloneqq 2\mu \,\varepsilon (u) + \lambda\, (\nabla \cdot u) \,\id,
\end{equation*}
where $\mu$ and $\lambda$ are the \emph{Lam\'e coefficients}, $\id$ is the identity tensor, and
\begin{equation*}
\label{eq:symStrainGrad}
\varepsilon (u) = \frac{1}{2} \left(\nabla u + (\nabla u)^T\right)
\end{equation*}
is the \emph{symmetric strain gradient}. Note that the Lam\'e coefficients may explicitly depend on the spatial variables, causing multiscale effects~\cite{BroV16b,FuACMPP19,AltCMPP20}. 

In view of the numerical approximation of the solution, we consider the corresponding weak formulation. For this, we introduce the Hilbert spaces 
\[
\V:=[H^1_0(\Omega)]^d, \qquad
\Q:=H^1_0(\Omega)
\]
as ansatz spaces for $u$ and $p$, respectively. Accordingly, we define the two pivot spaces~$\cHV:=[L^2(\Omega)]^d$ and~$\cHQ:=L^2(\Omega)$ such that~$\V,\cHV,\V^*$ and~$\Q,\cHQ,\Q^*$ define Gelfand triples with dense embeddings, see~\cite[Ch.~23.4]{Zei90a}. 
The $L^2$-inner products corresponding to~$\cHV$ and $\cHQ$ are simply denoted by~$(\,\cdot\,,\cdot\,)$. 

As a second step, we introduce bilinear forms corresponding to the (differential) operators in~\eqref{eq:poroStrong}. More precisely, we define~$a\colon\V\times\V\to\R$, $b, c\colon\Q\times\Q\to\R$, and $d\colon\V\times\Q\to\R$ by 
\begin{align*}
  a(u,v) \coloneqq \int_\Omega \sigma(u) : \varepsilon(v) \dx, \qquad 
  b(p,q) \coloneqq \int_\Omega \frac{\kappa}{\nu}\, \nabla p \cdot \nabla q \dx,\\
  c(p,q) \coloneqq \int_\Omega \frac{1}{M}\, p\, q \dx, \qquad 
  d(u,q) \coloneqq \int_\Omega \alpha\, (\nabla \cdot u)\, q \dx 
\end{align*}
for $u,\, v \in \V$ and $p,\, q \in \Q$.
Then, the weak formulation of system~\eqref{eq:poroStrong} seeks abstract functions~$u\colon [0,T] \to \V$ and $p\colon [0,T] \to \Q$ such that
\begin{subequations}
\label{eq:poroBilinear}
\begin{align}
	a(u,v) - d(v, p) 
	&= (\f, v), \label{eq:poroBilinear:a} \\
	d(\dot u, q) + c(\dot p,q) + b(p,q) 
	&= (\g, q).  
	\label{eq:poroBilinear:b} 
\end{align}
\end{subequations}	
for all test functions $v \in \V,\, q \in \Q$. 
Results on the unique solvability of the system in terms of a strong solution are shown in~\cite{Sho00}. 
We will discuss the existence of weak solutions in~\Cref{prop:weakSlnLinearCase} later on. 
\begin{remark}
System~\eqref{eq:poroBilinear} can be interpreted as a coupled system of an elliptic and a parabolic equation such that a spatial discretization yields a coupled system of an algebraic and a differential equation. As a consequence, system~\eqref{eq:poroBilinear} equals a so-called \emph{partial differential-algebraic equation}. 
Furthermore, one can show that the poroelastic equations~\eqref{eq:poroBilinear} have a port-Hamiltonian structure; cf.~\cite{AltMU21b}. 
\end{remark}
%
%
\subsection{General setting}\label{sec:poroNL:assumptions}
Before we introduce the displacement-dependent nonlinear permeability, the aim of this subsection is to gather properties of the introduced bilinear forms for the linear case. Further, we discuss the existence of weak solutions and the \emph{weak coupling} assumption. 

The bilinear form~$a\colon\V\times\V\to\R$ contains the linear elasticity model. Thus, by Korn's inequality we know that~$a$ is elliptic with a constant~$c_a$ that is mainly characterized by~$\mu$; cf.~\cite[Sect.~6.3]{Cia88}. Further, $a$ is symmetric and bounded in~$\V$. This implies that~$\Vert\cdot\Vert_a \vcentcolon= a(\,\cdot\,, \cdot\,)^{1/2}$ defines a norm, which is equivalent to the~$\V$-norm, namely 
\[
  c_a \|u\|^2_\V
  \le \|u\|^2_a
  = a(u,u)
  \le C^2_a\, \|u\|^2_\V. 
\]
Here, $C_a$ denotes the continuity constant of~$a$.  
Similarly, with a constant and positive permeability~$\kappa$ one has in the linear model that~$b\colon \Q\times\Q\to \R$ is symmetric, elliptic, and bounded in~$\Q$. 
The bilinear form~$c\colon \Q\times\Q\to \R$ equals a scaled variant of the $L^2$-inner product. Hence, it is symmetric, elliptic, and bounded in the pivot space~$\cHQ$. As a result, $\Vert\cdot\Vert_c \vcentcolon= c(\,\cdot\,, \cdot\,)^{1/2}$ defines a norm, which is equivalent to the~$\cHQ$-norm. 

The bilinear form~$d\colon \V\times\Q\to \R$ describes the coupling of the elliptic and parabolic part of the poroelastic system. Since we may integrate by parts, there are two continuity estimates, namely 
\begin{equation*}
  d(u,p) 
  \le C_d\, \Vert u \Vert_\V \Vert p\Vert_\cHQ
  \qquad\text{and}\qquad
  d(u,p) 
  \le \tilde C_d\, \Vert u \Vert_\cHV \Vert p\Vert_\Q.
\end{equation*}
With these assumptions we can prove the following existence result. 
\begin{proposition}[Weak solution for the linear case]
\label{prop:weakSlnLinearCase}
Consider initial data~$p^0\in \cHQ$ and right-hand sides $\f\in H^1(0,T;\V^*)$, $\g\in L^2(0,T;\Q^*)$. 
Under the above assumptions on the bilinear forms~$a$, $b$, $c$, and~$d$, system~\eqref{eq:poroBilinear} has a unique weak solution
\begin{align*}
  p \in L^2(0,T;\Q) \cap H^1(0,T;\Q^*) \hookrightarrow C([0,T];\cHQ), \qquad
  u \in L^2(0,T;\V).
\end{align*}
\end{proposition}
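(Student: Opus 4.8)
The plan is to exploit the differential–algebraic structure already noted in the remark: equation~\eqref{eq:poroBilinear:a} is elliptic and can be solved for $u$ in terms of $p$, whereas~\eqref{eq:poroBilinear:b} is parabolic. First I would eliminate the displacement $u$, reducing the coupled problem to a single abstract parabolic evolution equation for $p$ alone, to which the standard linear theory of~\cite[Ch.~23]{Zei90a} applies, and then recover $u$ a posteriori.

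For the elimination, since $a$ is symmetric, bounded, and $\V$-elliptic, the Lax--Milgram lemma (Riesz representation) yields for every $r \in \Q$ a unique $S r \in \V$ with $a(Sr,v) = d(v,r)$ for all $v \in \V$, and for every $\phi \in \V^*$ a unique $u_\phi \in \V$ with $a(u_\phi,v) = \langle \phi, v\rangle$. Using $d(v,r) \le C_d \Vert v \Vert_\V \Vert r\Vert_\cHQ$ together with ellipticity of $a$, one checks that $S\colon \cHQ \to \V$ is bounded; moreover $\phi \mapsto u_\phi$ is linear and bounded $\V^* \to \V$, so that $f \in H^1(0,T;\V^*)$ gives $u_f \in H^1(0,T;\V)$ with $\dot u_f = u_{\dot f}$. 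Testing~\eqref{eq:poroBilinear:a} then forces $u = Sp + u_f$.

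Substituting $u = Sp + u_f$ into~\eqref{eq:poroBilinear:b} and writing the coupling and storage terms as a distributional time derivative gives $\tfrac{\mathrm d}{\mathrm dt}\big[ d(u,q) + c(p,q)\big] + b(p,q) = (\g,q)$. The key observation is that, by definition of $S$ and symmetry of $a$,
\begin{equation*}
d(Sp,q) = a(Sq,Sp) = a(Sp,Sq),
\end{equation*}
so that, setting $\hat c(r,q) \coloneqq a(Sr,Sq) + c(r,q)$ and using $d(u,q)+c(p,q) = \hat c(p,q) + d(u_f,q)$, the reduced problem reads
\begin{equation*}
\hat c(\dot p,q) + b(p,q) = (\g,q) - d(u_{\dot f},q) \eqqcolon \langle \gt, q\rangle .
\end{equation*}
The form $\hat c$ is symmetric, bounded on $\cHQ$ (from boundedness of $S$ and $a$), and $\cHQ$-elliptic, since $\hat c(r,r) \ge c(r,r) \gtrsim \Vert r\Vert_\cHQ^2$; moreover $\gt \in L^2(0,T;\Q^*)$, because $\dot f \in L^2(0,T;\V^*)$ yields $u_{\dot f} \in L^2(0,T;\V)$ and $d(u_{\dot f},\cdot)$ is controlled in $\Q^*$. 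Equipping the pivot space $\cHQ$ with the equivalent inner product $\hat c(\cdot,\cdot)$ turns this into a standard parabolic problem in the Gelfand triple $\Q \hookrightarrow \cHQ \hookrightarrow \Q^*$ with the symmetric, bounded, $\Q$-elliptic operator induced by $b$ and data $\gt \in L^2(0,T;\Q^*)$, $p^0 \in \cHQ$. The abstract theorem then delivers a unique $p \in L^2(0,T;\Q)\cap H^1(0,T;\Q^*) \hookrightarrow C([0,T];\cHQ)$, and $u = Sp + u_f \in L^2(0,T;\V)$ follows from boundedness of $S$; uniqueness for the original system is inherited from uniqueness in both the elimination and the parabolic step.

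I expect the main obstacle to be the rigorous justification of the elimination at the available regularity: rewriting $d(\dot u,q)+c(\dot p,q)$ as a genuine distributional time derivative and identifying $\dot u_f$ with $u_{\dot f}$ is exactly where the hypothesis $f \in H^1(0,T;\V^*)$ enters, and one must verify that the resulting data $\gt$ lands in $L^2(0,T;\Q^*)$ so that the parabolic theory applies verbatim. Checking that $\hat c$ remains coercive on $\cHQ$ (rather than degenerating) is immediate from $\hat c(r,r)\ge c(r,r)$, but it is the point on which the whole reduction hinges.
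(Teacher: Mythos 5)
Your proposal is correct and follows essentially the same route as the paper: solving the elliptic equation for $u$ via the operator $S=\calA^{-1}\calD^*$ (and $u_f=\calA^{-1}f$), substituting into the flow equation to obtain the parabolic problem $(\calC+\calD\calA^{-1}\calD^*)\dot p+\calB p=\g-\calD\calA^{-1}\dot f$ with the modified mass form $\hat c(r,q)=a(Sr,Sq)+c(r,q)$, and invoking standard linear parabolic theory (the paper cites Lions--Magenes where you cite Zeidler). Your version merely spells out the boundedness and $\cHQ$-ellipticity of $\hat c$ and the membership $\gt\in L^2(0,T;\Q^*)$, which the paper leaves implicit.
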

\begin{proof} 	
We only give the main ideas of the proof. For this, let~$\calA\colon\V\to\V^*$, $\calB,\calC\colon\Q\to\Q^*$, and~$\calD\colon\V\to\Q^*$ denote the operators corresponding to the bilinear forms $a$, $b$, $c$, and~$d$, respectively. Since~$\calA$ is invertible, we can differentiate the first equation in time and insert it into the second. This then yields the (linear) parabolic equation~$( \calC + \calD \calA^{-1}\calD^* )\, \dot p + \calB p = \g - \calD\calA^{-1} \dot\f$. Hence, the existence of a unique solution~$p$ in the stated spaces follows by~\cite[Ch.~3, Sect.~4]{LioM72}. The first equation finally yields the unique solution~$u$. 	
%
\end{proof}
For the convergence analysis of Section~\ref{sec:discretization} we need the assumption that the coupling is sufficiently weak in the following sense. 
\begin{assumption}[weak coupling condition]
\label{ass:weakCoupling}
We assume a weak coupling of the form 
\begin{align*}
	C_d^2 \le c_a\, c_c,  
\end{align*}
where $c_a, c_c$ denote the ellipticity constants of the bilinear forms~$a$, $c$, respectively, and~$C_d$ the continuity constant of~$d$. 	
\end{assumption}
\begin{remark}
If the physical coefficients introduced in Section~\ref{sec:poroNL:linear} are assumed to be constant and $\|\cdot\|_\V := \|\nabla \cdot\|_{L^2(D)}$, then the coupling condition in Assumption~\ref{ass:weakCoupling} may be rephrased as
\begin{equation*}
\alpha^2 M/\mu \leq 1,
\end{equation*}
see, e.g., \cite[Sect.~6.3]{Cia88}. We emphasize that this is essentially the coupling condition that is required for the drained splitting approach to be stable; cf.~\cite{KimTJ11b}. 
	
Note that in the field of poroelasticity, the coefficient~$\alpha$ is generally bounded by $1$. Further, the quotient $M/\mu$ is often of order $1$ as well; cf.~\cite[Sect.~3.3.4]{DetC93}. Therefore, Assumption~\ref{ass:weakCoupling} is satisfied in certain applications as they appear, e.g., in the field of geomechanics. 
\end{remark}
In the following, we discuss a nonlinear extension of the poroelastic equations, where we allow the bilinear form $b$ to depend on the displacement. 
%
%
\subsection{Nonlinear displacement-dependent permeability}\label{sec:poroNL:nonlinear}
We now leave the linear setting and assume that the permeability~$\kappa$ depends (in a possibly nonlinear fashion) on the divergence of the displacement. Thus, we replace the previously defined bilinear form $b$ by  
\begin{equation}
\label{def:nonlinearB}
  b(u;p,q) 
  \coloneqq \int_\Omega \frac{\kappa(\nabla\cdot u)}{\nu}\, \nabla p \cdot \nabla q \dx.
\end{equation}
Note that for fixed~$u$, $b$ has the same structure as in the linear case. 
Here, however, we consider the nonlinear case, which leads to the system 
\begin{subequations}
\label{eq:poroNonlinear}
\begin{align}
	a(u,v) - d(v, p) 
	&= (\f, v), \label{eq:poroNonlinear:a} \\
	d(\dot u, q) + c(\dot p,q) + b(u;p,q) 
	&= (\g, q)  
	\label{eq:poroNonlinear:b} 
\end{align}
\end{subequations}	
for test functions $v\in\V$, $q\in\Q$ and with the initial condition~$p(0)=p^0$. 
For the nonlinearity, we make the following assumptions. 
\begin{assumption}[nonlinear permeability]
\label{ass:b}
We assume that~$\kappa$ is Lipschitz continuous and bounded in terms of constants $\kappa_-$ and $\kappa_+$, i.e.,
\begin{align}
\label{eq:boundKappa}
  0 < \kappa_- \le \kappa \le \kappa_+ < \infty. 
\end{align}
This implies the existence of a positive constant~$L_b$ (depending on~$\nu$, $\kappa_+$, and the Lipschitz constant of $\kappa$) such that~$b\colon \V\times\Q\times\Q\to \R$ introduced in~\eqref{def:nonlinearB} satisfies 
\begin{subequations}\label{eq:boundB}
\begin{align}
  \| b(u;p,\cdot) - b(u;q,\cdot) \|_{\Q^*} 
  &\le L_b\, \| p-q \|_\Q, \label{eq:boundB1}\\
  \| b(u;p,\cdot) - b(v;p,\cdot) \|_{\Q^*} 
  &\le L_b\, \|p\|_\Q\,\| u-v \|_\V \label{eq:boundB2}
\end{align}
\end{subequations}
for all $u,v\in\V$ and $p,q\in\Q$. 
Furthermore, the lower bound of~$\kappa$ yields 
\begin{align}\label{eq:coerciveB}
  b(u; p, p)
  \geq c_b\, \|p\|_\Q^2 
\end{align}
uniformly in~$u\in\V$ and for all~$p\in\Q$, where $c_b$ depends on~$\nu$ and~$\kappa_-$. 
\end{assumption}
We emphasize that the remaining bilinear forms $a$, $c$, and $d$ remain unchanged, i.e., we keep the assumptions from Section~\ref{sec:poroNL:linear}.  
Further note that Assumption~\ref{ass:b} is a generalization of the linear case presented in Section~\ref{sec:poroNL:linear}, since the assumptions simplify to the continuity and ellipticity of the operator in the constant case~$\kappa(\nabla\cdot u)\equiv \kappa$. 

In the outlined setting, we can conclude the solvability of the nonlinear poroelasticity problem. Uniqueness is discussed afterwards. 
\begin{proposition}[Solvability of the nonlinear system]	
Consider the setting of Section~\ref{sec:poroNL:linear} with $b$ defined in~\eqref{def:nonlinearB} satisfying Assumption~\ref{ass:b} and~$f\equiv0$. Further, let the initial data satisfy~$p^0\in\cHQ$ and~$\g\in L^2(0,T;\cHQ)$. 
Then, system~\eqref{eq:poroNonlinear} has a weak solution 
\[
  u\in L^2(0,T; \calV), \qquad
  p\in L^2(0,T; \calQ).	
\]
\end{proposition}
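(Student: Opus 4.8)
The plan is to establish existence of a weak solution to the nonlinear system~\eqref{eq:poroNonlinear} via a Galerkin approximation combined with a fixed-point or compactness argument that handles the displacement-dependence of~$b$. First I would eliminate the displacement from the system as in the proof of~\Cref{prop:weakSlnLinearCase}: since~$\calA$ is invertible and~$f\equiv0$, equation~\eqref{eq:poroNonlinear:a} gives~$u = \calA^{-1}\calD^* p$, so that~$\dot u = \calA^{-1}\calD^* \dot p$. Substituting into~\eqref{eq:poroNonlinear:b} yields a single nonlinear parabolic equation for~$p$ alone, of the form
\begin{equation*}
  (\calC + \calD\calA^{-1}\calD^*)\, \dot p + \calB(u)\, p = \g,
  \qquad u = \calA^{-1}\calD^* p,
\end{equation*}
where~$\calB(u)\colon \Q\to\Q^*$ is the operator associated with~$b(u;\cdot,\cdot)$. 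The coefficient operator~$\calC + \calD\calA^{-1}\calD^*$ on the time-derivative term is symmetric, bounded, and coercive on~$\cHQ$ (it dominates~$\calC$, which is coercive by the ellipticity of~$c$), so it defines an equivalent inner product on~$\cHQ$.

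Next I would set up a Galerkin scheme in a sequence of finite-dimensional subspaces~$\Q_n\subset\Q$ whose union is dense. On each level the reduced equation becomes a system of ODEs in the coefficient vector; the map~$p\mapsto u=\calA^{-1}\calD^* p$ is linear and bounded~$\Q\to\V$, and~$\kappa$ is Lipschitz and bounded by Assumption~\ref{ass:b}, so the resulting nonlinear term is continuous, and local existence of the Galerkin solution~$p_n$ follows from the Carathéodory/Peano theory for ODEs. Testing the Galerkin equation with~$p_n$ and using the coercivity~\eqref{eq:coerciveB} together with the coercivity of the leading operator, I would derive the a priori energy estimate
\begin{equation*}
  \|p_n(t)\|_{\cHQ}^2 + \int_0^t \|p_n\|_\Q^2 \ds
  \;\lesssim\; \|p^0\|_{\cHQ}^2 + \int_0^T \|\g\|_{\Q^*}^2 \ds,
\end{equation*}
uniformly in~$n$, which bounds~$p_n$ in~$L^\infty(0,T;\cHQ)\cap L^2(0,T;\Q)$ and hence globalizes the solution; the corresponding bound on~$u_n=\calA^{-1}\calD^* p_n$ in~$L^2(0,T;\V)$ follows from boundedness of~$\calA^{-1}\calD^*$. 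These bounds give weakly(-$*$) convergent subsequences~$p_n\wconv p$ and~$u_n\wconv u$ in the respective spaces.

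The main obstacle is passing to the limit in the nonlinear term~$b(u_n;p_n,q)=\int_\Omega \tfrac{1}{\nu}\kappa(\nabla\cdot u_n)\,\nabla p_n\cdot\nabla q\dx$, since it is the product of a (merely weakly convergent) gradient~$\nabla p_n$ and a nonlinear function of~$u_n$. To handle this I would extract strong convergence of~$u_n$ from a compactness argument: estimating~$\dot p_n$ in~$L^2(0,T;\Q^*)$ (read off from the equation, using the bounds just obtained together with~\eqref{eq:boundB1}) gives equicontinuity in time, and the Aubin–Lions–Simon lemma then yields~$p_n\to p$ strongly in~$L^2(0,T;\cHQ)$; since~$u_n=\calA^{-1}\calD^* p_n$ with~$\calA^{-1}\calD^*$ bounded~$\cHQ\to\cHV$ (using the second continuity estimate for~$d$), we obtain~$u_n\to u$ strongly in~$L^2(0,T;\cHV)$, hence~$\nabla\cdot u_n\to\nabla\cdot u$ in~$L^2$ up to a further subsequence and thus pointwise a.e. By continuity of~$\kappa$ and its uniform bound~\eqref{eq:boundKappa}, dominated convergence gives~$\kappa(\nabla\cdot u_n)\to\kappa(\nabla\cdot u)$ strongly (e.g. in every~$L^r$, $r<\infty$), which combines with the weak convergence of~$\nabla p_n$ to identify the limit of the nonlinear form. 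The remaining linear terms pass to the limit by weak convergence, the initial condition~$p(0)=p^0$ is recovered in the usual way, and the relation~$u=\calA^{-1}\calD^* p$ is preserved under the limit, completing the existence proof.
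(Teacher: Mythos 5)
Your argument is essentially correct, but it is a genuinely different route from the paper's: the paper does not prove this proposition at all, it simply cites~\cite[Th.~2.9]{CaoCM13}, and that reference establishes existence by a Rothe-type argument (solvability of the implicit Euler scheme, uniform bounds, passage to the limit as the step size vanishes), whereas you run a Lions-style spatial Galerkin scheme on the reduced single-field equation. Both routes rest on the same ingredients — elimination of $u$ via $u=\calA^{-1}\calD^*p$, coercivity of $\calC+\calD\calA^{-1}\calD^*$ on $\cHQ$, the uniform lower bound~\eqref{eq:coerciveB}, and compactness to identify the limit of the nonlinear term — so what your approach buys is a self-contained proof within the paper's operator framework; what the citation buys is coverage of the slightly weaker hypothesis that $\kappa$ is merely continuous (which your argument in fact also handles, since you only use continuity and boundedness of $\kappa$, not its Lipschitz constant). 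Two small points deserve care. First, the bound on $\dot p_n$ in $L^2(0,T;\Q^*)$ that feeds Aubin--Lions is only as good as the stability of the Galerkin projection on $\Q$ (equivalently on $\Q^*$ by duality); choose the spaces $\Q_n$ as spans of eigenfunctions of a suitable elliptic operator so that the $\cHQ$-orthogonal projection is uniformly $\Q$-stable. Second, strong convergence of $u_n$ in $L^2(0,T;\cHV)$ alone does not yield convergence of $\nabla\cdot u_n$; the correct observation is that $\calA^{-1}\calD^*$ is bounded from $\cHQ$ into $\V$ (by the \emph{first} continuity estimate $d(v,q)\le C_d\|v\|_\V\|q\|_\cHQ$ together with the $\V$-ellipticity of $a$), so $p_n\to p$ in $L^2(0,T;\cHQ)$ already gives $u_n\to u$ in $L^2(0,T;\V)$ and hence $\nabla\cdot u_n\to\nabla\cdot u$ in $L^2$; from there your dominated-convergence and weak--strong pairing argument for the nonlinear form goes through as written.
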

\begin{proof}
The existence of a solution has been shown in~\cite[Th.~2.9]{CaoCM13} for the slightly more general case of~$\kappa$ being continuous rather than Lipschitz continuous.  
\end{proof}
\begin{remark}[Uniqueness of solutions]
\label{rem:uniqueness}
The uniqueness of a solution to~\eqref{eq:poroNonlinear} is investigated in~\cite[Th.~2.10]{CaoCM13} and requires additional assumptions such as the Lipschitz continuity of~$\kappa$. Further, it is asked for a condition, which is similar to our weak coupling condition stated in Assumption~\ref{ass:weakCoupling}. 
Translated to our notion, the sufficient condition reads 
\begin{equation*}
C_d^2\, 
\big( C_\mathrm{P} C_{\calD\A^{-1}\calD^*} \tfrac{\kappa_+}{\kappa_-} \big)
< c_a\, c_c,
\end{equation*}
where~$C_\mathrm{P}$ equals the Poincar{\'e} constant within~$\|q\|_{H^1(\Omega)} \leq C_\mathrm{P}\,\|\nabla q\|_{L^2(\Omega)}$ for~$q \in \Q$ and $C_{\calD\A^{-1}\calD^*}$ denotes the $H^1$-continuity constant of the operator $\calD\A^{-1}\calD^*$ (using the notation from the proof of Proposition~\ref{prop:weakSlnLinearCase}). 
Hence, we recover the weak coupling condition up to a constant factor, which depends on~$\kappa$. 
The final condition for the unique solvability of~\eqref{eq:poroNonlinear} is an~$L^\infty$-bound of~$\nabla p$, which can be achieved considering sufficiently smooth data. 
\end{remark}
In the remainder of this paper, we will always assume the existence of a unique solution~$(u,p)$ to~\eqref{eq:poroNonlinear} and concentrate on the numerical approximation of this solution. 
%
%
\subsection{Particular choices of the displacement-dependence in the permeability}\label{sec:nonlinModels}
To understand the explicit dependence of the permeability $\kappa$ on the divergence of the displacement, we emphasize that in many interesting configurations, the permeability may be assumed to depend on the \emph{porosity}, which describes the volume fraction which is occupied by the fluid. 
A well-established and in many cases reliable hypothesis for the explicit dependence on the porosity is the so-called \emph{Kozeny-Carman} relation, which traces back to Kozeny~\cite{Koz27} and was later on adjusted by Carman~\cite{Car37,Car38}. It couples the permeability in a nonlinear fashion to the porosity. For the particular case of a flow of a Newtonian fluid in the interstice between spherical particles in the context of poroelasticity, such a behavior is used and justified in~\cite{HsuC90}. The relation is further applied in~\cite{KimP99} to derive a permeability-displacement dependence under the assumption that the solid grains are relatively incompressible compared to the solid skeleton of the porous medium. To achieve this, a relation of the porosity and the \emph{volume strain} or \emph{dilatation} is required. The dilatation expresses the change of volume of the fluid and (under the assumption of small strains) is given by $\nabla \cdot u$. This explains the specific form of the permeability given in~\eqref{def:nonlinearB}, which is considered throughout this work.

A particular model for the permeability-displacement relation, which includes the Kozeny-Carman relation, is presented in the following example. 
\begin{example}[Kozeny-Carman-type permeability]
\label{exp:dilatationDependentPermeability}
A prominent case of a displacement-dependent permeability through the dilatation is the above-mentioned Kozeny-Carman-type permeability as considered in \cite{CaoCM13,BocGSW16} in the context of a poroelastic problem with linear stress-strain relations. 
The particular choice of~$\kappa$ used therein reads 
\begin{equation*}
\kappa(s) \coloneqq \left\{
\begin{aligned}
&\,\kappa_-, \quad && s \leq c_s,\\
&\,\kappa_0 \,\frac{\rho^3(s)}{(1- \rho(s))^2}, \quad && c_s < s < C_s,\\
&\,\kappa_+, \quad && s \geq C_s,
\end{aligned}
\right.
\end{equation*}
where $\kappa_0$ is the initial saturated permeability and $\rho$ the porosity, i.e., the ratio between fluid volume and total volume given by
\begin{equation*}
\rho(s) = \rho_0 + (1-\rho_0)s,
\end{equation*}
where $\rho_0 \in (0,1)$ is a given ground porosity. 
Further, $c_s$ and $C_s$ are some prescribed lower and upper bounds which fulfill
\begin{equation*}
\frac{\rho_0}{\rho_0-1} < c_s < C_s < 1
\end{equation*}
and lead to 
\begin{equation*}
\kappa_- = \kappa_0 \,\frac{\rho^3(c_s)}{(1- \rho(c_s))^2},\qquad \kappa_+ = \kappa_0 \,\frac{\rho^3(C_s)}{(1- \rho(C_s))^2}.
\end{equation*}
We emphasize that this particular example satisfies Assumption~\ref{ass:b} due to the explicit bounds and the Lipschitz continuity of~$\kappa$.  
\end{example} 

Apart from the presented Kozeny-Carman relation, also other representations of the permeability exist, which might depend on the geometrical setup or the specific problem configuration; cf., e.g., the discussion in~\cite{SchRZRK19}. Another coupling of porosity and permeability is, for instance, considered in~\cite{CauGHPST14} to model blood flow in the lamina cribrosa. Therein, the permeability depends quadratically on the dilatation, which turns out to be suitable to describe the flow of a Newtonian fluid through cylindrical pores. 
Finally, we also mention the \emph{network-inspired} permeability as described in~\cite{RahLVM20} and the exponential dependence on the displacement as investigated in~\cite{LaiM80,HolM90}, again in the context of biological tissues.

The network-inspired model is described in the following example with an artificially introduced lower bound to fulfill Assumption~\ref{ass:b}.
\begin{example}[Network-inspired permeability]\label{exp:networkInspiredPermeability}
In network structures consisting of channels that can be open or closed, the flow rate depends on the number of channels that are open, see, e.g.~\cite{Bal87,Won88}. Considering the possibility of channels being randomly closed or open, a permeability-porosity relation is presented in~\cite{RahLVM20} for an arbitrary network topology that reads
\begin{equation*}
\hat\kappa(s) \coloneqq \left\{
\begin{aligned}
&\,0, \quad && \rho(s) < \hat\rho,\\
&\,\kappa_0\frac{\rho(s)-\hat\rho}{\rho_0-\hat\rho}, \quad && \rho(s) \geq \hat\rho,
\end{aligned}
\right.
\end{equation*}
where the porosity $\rho$ is given by
\begin{equation*}
\rho(s) = 1 - (1-\rho_0)\exp(-s)
\end{equation*}
with $\rho_0,\,\hat\rho \in (0,1)$ and $\hat\rho < \rho_0$.
In order to fulfill the lower bound in Assumption~\ref{ass:b}, we artificially introduce a threshold in the permeability and define
\begin{equation*}
\kappa(s) = \hat\kappa(s) + \kappa_0\,\delta
\end{equation*}
with some small $\delta > 0$. This ensures that a flow through the medium is always possible. 
\end{example}
\begin{remark}[Nonlinear thermoelasticity]
Besides the here considered poroelasticity models, the given framework also fits to problems in the field of thermoelasticity, which covers the displacement of a material due to temperature changes. 	
Since linear poroelasticity and linear thermoelasticity are equivalent from a mathematical point of view~\cite{Bio56}, the given setting also includes applications with a nonlinear heat conductivity.
Note that, in order to satisfy Assumption~\ref{ass:weakCoupling}, the thermal expansion coefficient needs to be much smaller than the stress tensor; cf.~\cite{CalR14}.	
\end{remark}
%
%
%
\section{Time Discretization}\label{sec:discretization}
This section is devoted to the temporal discretization of the nonlinear system~\eqref{eq:poroNonlinear} by a semi-explicit Euler scheme. We prove first-order convergence under the weak coupling condition of Assumption~\ref{ass:weakCoupling}. 
The proposed approach decouples the system, which means that the two equations can be solved sequentially. At the same time, the system is automatically linearized such that no nonlinear solver is needed, leading to a remarkable boost of efficiency. 

Throughout this section, we consider an equidistant partition~$0=t_0 < t_1 < \dots < t_N=T$ with step size~$\tau$, i.e., $t_n = n\tau$. 
For simplicity, we assume continuity of the right-hand sides, i.e., $\f\in C([0,T]; \V^*)$, $\g\in C([0,T]; \Q^*)$, and define~$\f^n := \f(t_n) \in\V^*$, $\g^n := \g(t_n) \in\Q^*$. 
Note, however, that point evaluations may also be replaced by integral means if the right-hand sides are only square-integrable. 
Accordingly, we assume continuity of the solution pair~$(u,p)$ and initial data~$p^0\in\Q$. 

Before considering the semi-explicit approach, we shortly discuss the standard approach of a fully implicit Euler discretization applied to~\eqref{eq:poroNonlinear}. This leads to the semi-discrete system 
\begin{subequations}
\label{eq:poroImplEuler}
\begin{align}
	a(u^n, v) - d(v, p^n) &\;=\; (\f^n, v), \\
	d(\D u^n,q) + c(\D p^n,q) + b(u^n;p^n,q) &\;=\; (\g^n, q)
\end{align}
\end{subequations}	
for test functions~$v\in\V$, $q\in\Q$. Here, $\D u^n := \tau^{-1}(u^n-u^{n-1})$ denotes the discrete time derivative. 
The solvability of~\eqref{eq:poroImplEuler} for the case $\f^n=0$, $\g^n\in\cHQ$ has been discussed in~\cite[Lem.~2.4]{CaoCM13}, i.e., given~$u^{n-1} \in \calV$ and~$p^{n-1} \in \Q$, there exist~$u^n \in \calV$ and~$p^n \in \calQ$ that solve~\eqref{eq:poroImplEuler}. 

Note that~\eqref{eq:poroImplEuler} still marks a nonlinear system, which needs to be solved in every time step. Thus, a nonlinear solver which comprises an inner iteration is needed. Standard choices implement a Picard iteration; cf.~\cite{CaoCM13,BroV16b,FuCM20}. 
This is a fixed point iteration and has the following form: 
Given~$u^{n-1}\in\calV$, $p^{n-1}\in\calQ$ as approximation at time $t_{n-1}$, we first define~$u^n_0 \coloneqq u^{n-1}$ and $p^n_0 \coloneqq p^{n-1}$. Then, we solve for $j=1,2,\dots$ the \emph{linear} system 
\begin{subequations}
\label{eq:poroImplEuler:Picard}
	\begin{align}
	a(u^n_j, v) - d(v, p^n_j) &\;=\; (\f^n, v), \\
	\tau^{-1} d( u^n_j - u^{n-1}, q) + \tau^{-1} c(p^n_j - p^{n-1}, q) + b(u^n_{j-1}; p^n_j,q) &\;=\; (\g^n,q)
	\end{align}
\end{subequations}	
for all~$v\in\V$, $q\in\Q$. 
Obviously, a fixed point satisfies the nonlinear system~\eqref{eq:poroImplEuler}. 
In practice, one defines $u^n \coloneqq u^n_j$ and $p^n \coloneqq p^n_j$ for some index~$j$, depending on a certain stopping criterion, e.g., the residual of the current iteration. 
For corresponding numerical tests we refer to Section~\ref{ss:sharp}. 
%
%
\subsection{Semi-explicit Euler scheme}
We turn to the semi-explicit approach and consider the equidistant partition with step size~$\tau$ as before. 
Given the previous iterates~$u^{n-1} \in \calV$ and~$p^{n-1} \in \Q$, we now aim so solve the system 
\begin{subequations}
\label{eq:poroSemiExpl}
\begin{align}
	a(u^n, v) - d(v, p^{n-1}) &\;=\; (\f^n, v), \label{eq:poroSemiExpl:a} \\
	d(\D u^n,q) + c(\D p^n,q) + b(u^n;p^n,q) &\;=\; (\g^n,q) \label{eq:poroSemiExpl:b}
\end{align}
\end{subequations}	
for all~$v\in\V$, $q\in\Q$. 
In order to be well-posed, we need to discuss the solvability of system~\eqref{eq:poroSemiExpl}, which turns out to be much more straightforward than in the fully implicit case. 
\begin{lemma}[Well-posedness of the semi-explicit scheme~\eqref{eq:poroSemiExpl}]
Consider the setting from Section~\ref{sec:poroNL:linear} with $b$ defined in~\eqref{def:nonlinearB} satisfying Assumption~\ref{ass:b}. 
Further assume~$u^{n-1} \in \calV$, $p^{n-1} \in \Q$, $\f^n \in \V^*$, and~$\g^n\in \Q^*$. 
Then, system~\eqref{eq:poroSemiExpl} attains a unique solution~$u^n\in\calV$ and $p^n \in \calQ$. 
\end{lemma}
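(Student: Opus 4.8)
The plan is to exploit the decoupling that the semi-explicit structure provides and to reduce well-posedness to two successive applications of the Lax--Milgram lemma. The crucial observation is that, unlike in the fully implicit scheme~\eqref{eq:poroImplEuler}, the nonlinearity no longer couples the two unknowns within a single time step: in~\eqref{eq:poroSemiExpl:a} the pressure enters only through the \emph{known} previous iterate~$p^{n-1}$, and once~$u^n$ has been computed from the first equation, the first argument of~$b(u^n;\,\cdot\,,\cdot\,)$ in the second equation is fixed, which renders that equation linear in~$p^n$.

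First I would treat~\eqref{eq:poroSemiExpl:a} in isolation. Moving the known term to the right-hand side, it reads $a(u^n,v)=(\f^n,v)+d(v,p^{n-1})$ for all~$v\in\V$. The bilinear form~$a$ is bounded and elliptic on~$\V$ with constants~$C_a$ and~$c_a$, and the right-hand side is a bounded linear functional on~$\V$: indeed~$\f^n\in\V^*$, and by the continuity of~$d$ the map~$v\mapsto d(v,p^{n-1})$ is bounded by~$C_d\,\|p^{n-1}\|_{\cHQ}$. The Lax--Milgram lemma then yields a unique~$u^n\in\calV$.

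With~$u^n$ at hand, I would insert~$\D u^n=\tau^{-1}(u^n-u^{n-1})$ and~$\D p^n=\tau^{-1}(p^n-p^{n-1})$ into~\eqref{eq:poroSemiExpl:b} and collect all terms involving the unknown~$p^n$ on the left. This defines the bilinear form~$\tilde b(p,q):=\tau^{-1}c(p,q)+b(u^n;p,q)$ on~$\Q\times\Q$, tested against the functional~$q\mapsto(\g^n,q)-d(\D u^n,q)+\tau^{-1}c(p^{n-1},q)$, which lies in~$\Q^*$ since~$\g^n\in\Q^*$, $\D u^n\in\V$, and~$p^{n-1}\in\Q\hookrightarrow\cHQ$, using the continuity of~$d$ and of~$c$. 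Boundedness of~$\tilde b$ follows from the boundedness of~$c$ on~$\cHQ$ together with the Lipschitz bound~\eqref{eq:boundB1}, while coercivity is immediate: since~$c(q,q)\geq0$ and~\eqref{eq:coerciveB} gives~$b(u^n;q,q)\geq c_b\,\|q\|_\Q^2$ uniformly in the displacement, we obtain~$\tilde b(q,q)\geq c_b\,\|q\|_\Q^2$. A second application of Lax--Milgram produces the unique~$p^n\in\calQ$.

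The argument is essentially routine, and the only place where the nonlinear structure of Assumption~\ref{ass:b} is needed is the coercivity and boundedness of~$\tilde b$; both follow directly because the coercivity bound~\eqref{eq:coerciveB} holds uniformly in~$u$ and the bound~\eqref{eq:boundB1} controls~$b(u^n;\,\cdot\,,\cdot\,)$ for the now-fixed~$u^n$. I do not expect a genuine obstacle here: the whole point is that the semi-explicit decoupling removes the fixed-point (or monotonicity) argument that is required in the implicit case, so uniqueness comes for free from the two coercive linear problems rather than from a contraction estimate.
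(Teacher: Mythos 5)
Your proposal is correct and follows essentially the same route as the paper's proof: solve \eqref{eq:poroSemiExpl:a} for $u^n$ by Lax--Milgram with the known datum $p^{n-1}$ moved to the right-hand side, then recast \eqref{eq:poroSemiExpl:b} as a linear variational problem for $p^n$ with the form $\tilde b(p,q)=b(u^n;p,q)+\tau^{-1}c(p,q)$, whose ellipticity follows from the uniform lower bound \eqref{eq:coerciveB}. The only (harmless) addition on your side is the explicit verification of the boundedness of $\tilde b$ via \eqref{eq:boundB1}, which the paper leaves implicit.
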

\begin{proof}
For fixed~$p^{n-1}$, the term $d(\,\cdot\,,p^{n-1})$ defines a functional in~$\calV^*$. Thus, by the assumptions on the bilinear form~$a$, equation~\eqref{eq:poroSemiExpl:a} provides a unique~$u^n\in\calV$. With this in hand, equation~\eqref{eq:poroSemiExpl:b} can be rewritten as the \emph{linear} variational problem  
\[
 \tilde b(p^n,q) 
 = (\tilde \g^n ,q) 
 := (\g^n,q) + \tau^{-1}\, c(p^{n-1},q) - \tau^{-1}\, d(u^n - u^{n-1},q)	
\]  
with $\tilde b(p^n,q) := b(u^n;p^n,q) + \tau^{-1}\, c(p^n,q)$. 
The right-hand side satisfies~$\tilde \g^n \in \Q^*$ and the form $\tilde b\colon \calQ\times\calQ\to \R$ is elliptic, since
\[
  \tilde b(p,p)
  =  b(u^n;p,p) + \tau^{-1}\, c(p,p) 
  \ge c_b\, \| p\|_\calQ^2.
\]  
Hence, there exists a unique~$p^n \in \calQ$, which completes the proof. 
\end{proof}
In order to prove convergence of the semi-explicit scheme, we follow the idea first presented in~\cite{AltMU21} for the linear case and consider a delay system, which is closely related to the original system~\eqref{eq:poroNonlinear}. 
This is the subject of the following subsection. 
%
\subsection{A related delay system}
As an alternative point of view, one may regard the semi-explicit scheme~\eqref{eq:poroSemiExpl} as the \emph{implicit} Euler method applied to the delay system
\begin{subequations}
	\label{eqn:delay:B}
	\begin{align}
	a(\bar{u},v) - d(v, \bar{p}(\,\cdot-\tau)) 
	&= (\f, v), \label{eqn:delay:B:a} \\
	d(\dot {\bar{u}}, q) + c(\dot {\bar{p}},q) + b(\bar{u};\bar{p},q) 
	&= (\g, q) 
	\label{eqn:delay:B:b} 
	\end{align}
\end{subequations}
for test functions~$v\in \V$ and~$q \in \Q$. 
Note that the time delay is exactly the temporal step size $\tau$ and hence fixed. 
For such a delay system, one needs a prescribed \emph{history function} for~$\bar{p}$ in $[-\tau, 0]$ rather than only an initial value. Since there is some freedom of choice regarding the history function, we set~$\bar{p}\big|_{[-\tau, 0]}(t) = \Phi(t)$ and demand 
\begin{equation}
\label{eqn:history:B}
\Phi(-\tau) = \Phi(0) = p^0, \qquad  
\Phi \in C^\infty([-\tau, 0]; \Q).
\end{equation}
Note that one exemplary choice is given by~$\Phi\equiv p^0$. 
In any case, a history function satisfying~\eqref{eqn:history:B} implies~$\bar p(0) = \Phi(0) = p^0$ and by equation~\eqref{eqn:delay:B:a} we conclude~$\bar u(0) = u(0)$, since  
\begin{equation*}
a(\bar u(0),v)
= (\f(0), v) + d(v, \Phi(-\tau)) 
= (\f(0), v) + d(v, p^0).
\end{equation*} 
\begin{remark}[Solvability of the delay system]
Assuming sufficient regularity of the right-hand sides~$f, g$ and the history function~$\Phi$, one can prove the unique solvability of~\eqref{eqn:delay:B}. For this, one can apply Bellmann's method of steps~\cite[Ch.~3.4]{BelZ03}, which considers the intervals $[t_n, t_{n+1}]$ for $n=0, \dots, N-1$ successively. W.l.o.g.~we consider the first interval~$[0,\tau]$, on which we need to solve the parabolic equation 
\[
  c(\dot {\bar{p}},q) + b_0(\bar{p},q) 
  = (\tilde \g, q)
\] 
for all $q\in\Q$ with (using the operator notation from the proof of Proposition~\ref{prop:weakSlnLinearCase}) 
\[
  \tilde \g \coloneqq \g - \calD \calA^{-1} \dot\f - \calD \calA^{-1}\calD^* \dot{\Phi}(\cdot-\tau), \quad
  b_0(p,q) \coloneqq \int_\Omega \frac{\kappa(\nabla\cdot  \calA^{-1}\calD^*\Phi(\cdot-\tau))}{\nu}\, \nabla p \cdot \nabla q \dx.
\]
Note that~$b_0\colon \Q\times\Q\to \R$ is linear, bounded, and (uniformly) elliptic such that~\cite[Th.~26.1]{Wlo92} is applicable. 
\end{remark}
In order to use the interpretation of the semi-explicit discretization as an implicit discretization of the corresponding delay system, we need to show that the two systems~\eqref{eq:poroNonlinear} and~\eqref{eqn:delay:B} only differ by a term of order~$\tau$. To show this, we need the following regularity assumption. 
\begin{assumption}
\label{ass:p}
Given~$p^0\in\Q$ and a history function~$\Phi$ as in~\eqref{eqn:history:B}, let systems~\eqref{eq:poroNonlinear} and~\eqref{eqn:delay:B} be uniquely solvable. Further assume that the solutions are bounded in the sense that 
\[
  p \in L^\infty(0,T;\Q)
  \qquad\text{and}\qquad
  \bar p \in W^2(0,T;\cHQ). 
\] 
\end{assumption}
The estimate of the differences $\bar p-p$ and $\bar u-u$ is subject to the following proposition. 
\begin{proposition}[Difference of original and delay system]
\label{prop:delayError}
Within the setting of Section~\ref{sec:poroNL:linear} with $b$ defined in~\eqref{def:nonlinearB} satisfying Assumption~\ref{ass:b}, let~$(u, p)$ denote the solution to~\eqref{eq:poroNonlinear} with $p(0)=p^0\in\Q$ and~$(\bar u, \bar p)$ the solution to~\eqref{eqn:delay:B} for a history function satisfying~\eqref{eqn:history:B}. 
Then, given Assumption~\ref{ass:p}, we have the error bound 
\begin{equation*}
  \|\bar u(t) - u(t) \|^2_{\V} + \|\bar p(t) - p(t) \|^2_{\cHQ} + \int_{0}^{t} \|\bar p(s) - p(s) \|_{\Q}^2\ds 
  \, \leq\,  \frac{\tau^2}{c_0}\, \big(C_1 + C_2\, t\big)\, \exp(C_3\, t), 
\end{equation*}
where $c_0 := \min\{\tfrac{1}{2}c_a,c_b,c_c\}$, $C_1 := 2\, \tfrac{C_d^2}{c_a}\, \|\dot{\bar p}\|_{L^\infty(\cHQ)}^2$, and~$C_2 := C_d\, \|\ddot{\bar p}\|_{L^\infty(\cHQ)}^2$. 
Further, the constant in the exponential term is given by $C_3 := 2\frac{C_d}{c_a} + 2\tfrac{L_b^2}{c_ac_b} \|p\|^2_{L^\infty(\Q)}$.
\end{proposition}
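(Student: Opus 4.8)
The plan is to derive an energy estimate for the error quantities $e_u := \bar u - u$ and $e_p := \bar p - p$ and then close it with Grönwall's inequality. Subtracting~\eqref{eq:poroNonlinear:a} from~\eqref{eqn:delay:B:a} gives $a(e_u,v) = d\big(v, \bar p(\,\cdot-\tau) - p\big)$ for all $v\in\V$. The crucial observation is the splitting $\bar p(\,\cdot-\tau) - p = e_p - \delta$, where the delay defect $\delta(t) := \bar p(t) - \bar p(t-\tau) = \int_{t-\tau}^{t} \dot{\bar p}(s)\ds$ satisfies $\|\delta(t)\|_\cHQ \le \tau\,\|\dot{\bar p}\|_{L^\infty(\cHQ)}$ and $\|\dot\delta(t)\|_\cHQ \le \tau\,\|\ddot{\bar p}\|_{L^\infty(\cHQ)}$; this defect is exactly the source of the factor $\tau$. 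I would also record at the outset that the history function~\eqref{eqn:history:B} forces $e_p(0)=0$ and, via the elliptic equation at $t=0$, also $e_u(0)=0$, so that all initial contributions drop out later.

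For the energy argument I would test the parabolic error equation (the difference of~\eqref{eqn:delay:B:b} and~\eqref{eq:poroNonlinear:b}) with $q = e_p$, which yields $d(\dot e_u, e_p) + \tfrac12\tfrac{\mathrm d}{\mathrm dt}\|e_p\|_c^2 + \big(b(\bar u;\bar p,e_p)-b(u;p,e_p)\big) = 0$. To control the troublesome coupling term $d(\dot e_u,e_p)$, I would test the elliptic error identity with $v=\dot e_u$, using the symmetry of $a$ to obtain $\tfrac12\tfrac{\mathrm d}{\mathrm dt}\|e_u\|_a^2 = d(\dot e_u, e_p) - d(\dot e_u, \delta)$, and substitute. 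The nonlinear difference I would split as $b(\bar u;\bar p,e_p) - b(u;p,e_p) = b(\bar u;e_p,e_p) + \big(b(\bar u;p,e_p)-b(u;p,e_p)\big)$, keeping $\bar u$ in the coercive part so that~\eqref{eq:coerciveB} gives $b(\bar u;e_p,e_p)\ge c_b\|e_p\|_\Q^2$, while the remainder is estimated via~\eqref{eq:boundB2} by $|b(\bar u;p,e_p)-b(u;p,e_p)| \le L_b\,\|p\|_\Q\,\|e_u\|_\V\,\|e_p\|_\Q$; note that it is $\|p\|_\Q$ (not $\|\bar p\|_\Q$) that appears, which is what produces the factor $\|p\|_{L^\infty(\Q)}^2$ in $C_3$. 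Collecting terms yields the differential identity $\tfrac12\tfrac{\mathrm d}{\mathrm dt}\big(\|e_u\|_a^2+\|e_p\|_c^2\big) + b(\bar u;e_p,e_p) = -d(\dot e_u,\delta) - \big(b(\bar u;p,e_p)-b(u;p,e_p)\big)$.

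I would then integrate over $[0,t]$ and treat the only term still carrying a time derivative of the error, $\int_0^t d(\dot e_u,\delta)\ds$, by integration by parts in time, $\int_0^t d(\dot e_u,\delta)\ds = d(e_u(t),\delta(t)) - \int_0^t d(e_u,\dot\delta)\ds$, where the boundary term at $0$ vanishes since $e_u(0)=0$. All three resulting right-hand side contributions are now bounded using the continuity estimate $d(u,p)\le C_d\|u\|_\V\|p\|_\cHQ$ together with the $O(\tau)$ bounds on $\delta$ and $\dot\delta$, and Young's inequality: the endpoint term $d(e_u(t),\delta(t))$ is balanced against the available $\tfrac{c_a}{2}\|e_u(t)\|_\V^2$ (yielding the $\tau^2 C_1$ contribution), the term $\int_0^t d(e_u,\dot\delta)\ds$ against $\int_0^t\|e_u\|_\V^2\ds$ (yielding the $\tau^2 C_2\,t$ contribution), and the nonlinear remainder against $c_b\int_0^t\|e_p\|_\Q^2\ds$ and $\int_0^t\|e_u\|_\V^2\ds$. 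After applying the norm equivalences and absorbing the small multiples of $\|e_u(t)\|_\V^2$ and $\int_0^t\|e_p\|_\Q^2\ds$ into the left-hand side, one arrives at a bound of the form (energy at time $t$) $\lesssim \tau^2(C_1 + C_2 t) + C_3'\int_0^t\|e_u\|_\V^2\ds$, and Grönwall's lemma applied to the current-time energy produces the stated exponential factor $\exp(C_3 t)$, with $c_0=\min\{\tfrac12 c_a,c_b,c_c\}$ collecting the coercivity constants.

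The main obstacle is the treatment of the mixed coupling term $d(\dot e_u,e_p)$: there is no a priori control on $\dot e_u$, and the resolution is the two-step device of first replacing it through the elliptic relation and then eliminating the remaining $\dot e_u$ by integration by parts in time, which is precisely where the regularity $\bar p\in W^2(0,T;\cHQ)$ and $p\in L^\infty(0,T;\Q)$ of Assumption~\ref{ass:p} enters. A secondary point to get right is the bookkeeping of the nonlinear splitting, so that the perturbation carries $\|p\|_\Q$ rather than $\|\bar p\|_\Q$. I emphasize that the weak coupling condition of Assumption~\ref{ass:weakCoupling} is \emph{not} needed for this comparison estimate; it only becomes relevant when the delay solution is subsequently compared with the discrete scheme.
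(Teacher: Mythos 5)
Your proposal is correct and follows essentially the same route as the paper's proof: the same error equations tested with $v=\dot e_u$ and $q=e_p$, the same splitting of the nonlinear term keeping $\bar u$ in the coercive part and $\|p\|_\Q$ in the Lipschitz remainder, the same integration by parts in time to eliminate $\dot e_u$ from the delay-defect term, and the same Young/Gr\"onwall closing. The only (cosmetic, arguably cleaner) difference is that you write the defect as $\delta(t)=\int_{t-\tau}^{t}\dot{\bar p}(s)\ds$ with $\dot\delta(t)=\int_{t-\tau}^{t}\ddot{\bar p}(s)\ds$, whereas the paper uses a Taylor/mean-value representation $\tau\,\dot{\bar p}(\xi_t)$ and $\tau\,\ddot{\bar p}(\zeta_s)$; both yield identical $O(\tau)$ bounds and the same constants.
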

\begin{proof}
Let us introduce the differences~$e_u := \bar u - u$ and~$e_p := \bar p - p$ for which we know that~$e_u(0)=0$ and~$e_p(0)=0$ by the construction of the history function. 
Next, we consider a Taylor expansion of $\bar p$, 
namely
\begin{align}
\label{eqn:taylor}
  \bar p(t-\tau) 
  = \bar p(t) - \tau\, \dot{\bar p}(\xi_t) 
\end{align}
for some~$\xi_t \in (t-\tau, t) \subseteq (-\tau, T]$. 
With this, we can derive 
\begin{subequations}
\label{eqn:lemDelayTwoField}
\begin{align}
	a(e_u,v) - d(v, e_p) 
	&= -\tau\, d(v, \dot{\bar p}(\xi_t)), \label{eqn:lemDelayTwoField:a} \\
	d(\dot e_u, q) + c(\dot e_p,q) + b(\bar u;\bar p,q) - b(u;p,q) 
	&= 0 \label{eqn:lemDelayTwoField:b}  
\end{align}
\end{subequations}
for all test functions~$v\in\V$, $q\in\Q$.  
Summing up~\eqref{eqn:lemDelayTwoField:a} and~\eqref{eqn:lemDelayTwoField:b} with test functions $v = \dot e_u$ and $q = e_p$, we obtain
\begin{equation*}
a(e_u,\dot e_u) + c(\dot e_p,e_p) + b(\bar u; \bar p, e_p) - b(u;p,e_p) = -\tau\,d(\dot e_u, \dot{\bar p}(\xi_t))
\end{equation*}
and thus
\begin{equation*}
\tfrac{1}{2}\tddt\|e_u\|_a^2 + \tfrac{1}{2}\tddt\|e_p\|_c^2 + b(\bar u; e_p,e_p) = - \tau\,d(\dot e_u, \dot{\bar p}(\xi_t)) - b(\bar u;p,e_p) + b(u;p,e_p).
\end{equation*}
Integration over~$[0,t]$ and a multiplication by $2$ yields 
\begin{align*}
\|e_u(t)\|_a^2 + \|e_p&(t)\|_c^2 + 2 \int_0^t b(\bar u;e_p,e_p) \ds \\
&= -2\tau \int_0^t \int_{\Omega} \alpha\, (\nabla \cdot \dot e_u) \,\dot{\bar p}(\xi_s)\dx\ds - 2 \int_0^t b(\bar u;p,e_p) - b(u;p,e_p) \ds\\
& = 2\tau \int_0^t\int_{\Omega} \alpha\, (\nabla \cdot e_u) \,\tdds\big(\dot{\bar p}(\xi_s)\big)\dx\ds - 2\tau \int_{\Omega} \alpha\, (\nabla \cdot e_u(t)) \,\dot{\bar p}(\xi_t)\dx\\
&\qquad - 2 \int_0^t b(\bar u;p,e_p) - b(u;p,e_p) \ds,
\end{align*}
where we use integration by parts in the last step. 
For the derivative of~$\dot{\bar p}(\xi_s)$ we now use the fact that 
\[ \tdds \dot{\bar p}(\xi_s) = \tfrac{\dot{\bar p}(s) - \dot{\bar p}(s-\tau)}{\tau} = \ddot{\bar p}(\zeta_s),\]
where we use~\eqref{eqn:taylor} for the first and the existence of an appropriate $\zeta_s \in (s-\tau,s) \subseteq (-\tau, T]$ by the \emph{mean value theorem} for the second equality. 
By the ellipticity of~$b$ we conclude that  
\begin{align*}
\|e_u&(t)\|_a^2 + \|e_p(t)\|_c^2 + 2 \,c_b \int_0^t \|e_p\|^2_\Q \ds \notag \\
&\quad\le 2\tau \int_0^t d(e_u, \ddot{\bar p}(\zeta_s)) \ds - 2\tau\, d(e_u(t), \dot{\bar p}(\xi_t)) - 2 \int_0^t b(\bar u;p,e_p) - b(u;p,e_p) \ds. 
\end{align*}
By Assumption~\ref{ass:b} we have
\[
  b(\bar u;p,e_p) - b(u;p, e_p)
  \le L_b\, \|p\|_\Q \|e_u\|_\V \|e_p\|_\Q
\]
and thus, 
%
multiple applications of the weighed Young's inequality~\cite[App.~B]{Eva98} yield
\begin{align*}
\|e_u(t)\|_a^2 + \|e_p&(t)\|_c^2 + 2 \,c_b \int_0^t \|e_p\|^2_\Q \ds\\
&\leq C_d\tau^2 t\,\|\ddot{\bar p}\|_{L^\infty(\cHQ)}^2  + C_d\int_0^t \|e_u\|^2_\V\ds + 2 \tfrac{C_d^2}{c_a}\tau^2\, \|\dot{\bar p}\|_{L^\infty(\cHQ)}^2\\
&\qquad\quad + \tfrac{1}{2}\|e_u(t)\|^2_a
+ c_b\int_0^t  \|e_p\|_\Q^2 \ds + \tfrac{L_b^2}{c_b} \|p\|^2_{L^\infty(\Q)} \int_0^t\|e_u\|^2_\V\ds.
\end{align*}
We can now absorb the first two terms in the last line to get
\begin{align*}
\tfrac 12 \|e_u(t)\|_a^2 + \|e_p(t)\|_c^2 + c_b \int_0^t \|e_p\|^2_\Q \ds
%
%
\le \tau^2\, (C_1 + C_2\, t) + C_3 \int_0^t \tfrac 12\, \|e_u\|^2_a \ds
\end{align*}
with constants~$C_1 := 2 \tfrac{C_d^2}{c_a}\, \|\dot{\bar p}\|_{L^\infty(\cHQ)}^2$, $C_2 := C_d\, \|\ddot{\bar p}\|_{L^\infty(\cHQ)}^2$, and~$C_3 := 2\frac{C_d}{c_a} + 2\tfrac{L_b^2}{c_ac_b} \|p\|^2_{L^\infty(\Q)}$. 
Hence, an application of Gr\"onwall's inequality yields  
\begin{align*}
  \tfrac 12 \|e_u(t)\|_a^2 + \|e_p(t)\|_c^2 + c_b \int_0^t \|e_p\|^2_\Q \ds
  \le \tau^2\, (C_1 + C_2\, t)\, \exp( t\, C_3). 
\end{align*}
The assertion follows with the lower bounds of the bilinear forms.  
\end{proof}
%
%
\subsection{Proof of convergence}
After we have seen that the pairs~$(u,p)$ and~$(\bar u,\bar p)$ only differ by a term of order~$\tau$, we now analyze the error caused by the \emph{implicit} discretization of the delay system~\eqref{eqn:delay:B}. 
For this, we assume that the weak coupling condition introduced in Section~\ref{sec:poroNL:assumptions} holds. 
\begin{proposition}[Semi-discrete error for the delay system]
\label{prop:discError:Delay}
Consider once more the assumptions of Proposition~\ref{prop:delayError} including Assumptions~\ref{ass:b} and~\ref{ass:p}. Further consider the weak coupling condition from Assumption~\ref{ass:weakCoupling} as well as  
\begin{equation}
\label{eq:boundTau}
  \tau < \frac{c_a c_b}{2\, L_b^2\, \|\bar p\|^2_{L^\infty(\Q)} }.
\end{equation}
Let~$(\bar u, \bar p)$ denote the exact solution to~\eqref{eqn:delay:B} and~$(u^n,p^n)$ the sequence resulting from~\eqref{eq:poroSemiExpl} for $n \le T/\tau$ and exact initial data. 
Then we have the error bounds  
\begin{equation*}
  \|\bar u(t_n) - u^n \|^2_{\V} + \|\bar p(t_n) - p^n \|^2_{\cHQ} 
  \ \le\ \tau^2\, \frac{C_1}{C_2}\, \big(\! \exp(C_2\, t_n) - 1 \big)
\end{equation*}
and 
\begin{equation*}
  \sum_{k=1}^{n} \tau\, \|\bar p(t_k) - p^k \|_{\Q}^2 
  \ \le\ 
  \tau^2\, C_1\, t_n\, \exp(C_2\, t_n),  
\end{equation*}
where $C_1 := \tfrac{4}{c_b}\, \tilde C_d^2\, \|\ddot{\bar u}\|_{L^\infty(\cHV)} 
+ \tfrac{4}{c_b}\, C_c^2\, \CQtoHsquare\, \|\ddot{\bar p}\|_{L^\infty(\cHQ\!)}$ and $C_2 := \tfrac{2 L^2_b}{c_a c_b} \|\bar p\|_{L^\infty(\Q)}^2$. 
\end{proposition}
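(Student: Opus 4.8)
The plan is to build on the interpretation stressed above that the semi-explicit scheme~\eqref{eq:poroSemiExpl} is nothing but the implicit Euler discretization of the delay system~\eqref{eqn:delay:B}. Because the delay coincides with the step size~$\tau$, the elliptic equation~\eqref{eqn:delay:B:a} is reproduced \emph{exactly} at every time level~$t_n$, and a consistency error enters only through the time derivatives in the parabolic equation~\eqref{eqn:delay:B:b}. Writing $e_u^n := \bar u(t_n) - u^n$ and $e_p^n := \bar p(t_n) - p^n$, with $e_u^0 = e_p^0 = 0$ by the exact initialization, I would first subtract~\eqref{eq:poroSemiExpl} from~\eqref{eqn:delay:B} at~$t_n$ to obtain
\[
  a(e_u^n, v) = d(v, e_p^{n-1}), \qquad
  d(\D e_u^n, q) + c(\D e_p^n, q) + b(\bar u(t_n);\bar p(t_n), q) - b(u^n;p^n, q) = \rho^n(q)
\]
for all $v\in\V$, $q\in\Q$, where $\rho^n(q) := d(\D\bar u(t_n) - \dot{\bar u}(t_n), q) + c(\D\bar p(t_n) - \dot{\bar p}(t_n), q)$ gathers the implicit-Euler quadrature errors. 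Using the integral form of the Taylor remainder together with the second continuity estimate of~$d$ and the embedding $\Q\hookrightarrow\cHQ$, I would bound $\|\rho^n\|_{\Q^*} \lesssim \tau\,(\tilde C_d\,\|\ddot{\bar u}\|_{L^\infty(\cHV)} + C_c\,\CQtoH\,\|\ddot{\bar p}\|_{L^\infty(\cHQ)})$, which is the origin of the constant~$C_1$; here Assumption~\ref{ass:p} supplies the required $W^2$-regularity of~$\bar p$ (and, through the elliptic equation, of~$\bar u$).

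For the energy estimate I would test the elliptic error equation with the displacement increment $v = e_u^n - e_u^{n-1} = \tau\,\D e_u^n$ and the parabolic error equation with $q = e_p^n$ after scaling that equation by~$\tau$, and add the results. The symmetry of~$a$ and~$c$ yields the telescoping identities $\tau\, a(e_u^n, \D e_u^n) = \tfrac12(\|e_u^n\|_a^2 - \|e_u^{n-1}\|_a^2 + \|e_u^n - e_u^{n-1}\|_a^2)$ and $\tau\, c(\D e_p^n, e_p^n) = \tfrac12(\|e_p^n\|_c^2 - \|e_p^{n-1}\|_c^2 + \|e_p^n - e_p^{n-1}\|_c^2)$, where I deliberately retain the squared increments. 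After replacing $d(e_u^n - e_u^{n-1}, e_p^{n-1})$ by $a(e_u^n, e_u^n - e_u^{n-1})$ via the elliptic equation, the two coupling contributions collapse to the single increment-increment term $d(e_u^n - e_u^{n-1}, e_p^n - e_p^{n-1})$. The coercivity~\eqref{eq:coerciveB} provides $\tau\, c_b\,\|e_p^n\|_\Q^2$, and the nonlinearity is split as $b(\bar u(t_n);\bar p(t_n), e_p^n) - b(u^n;\bar p(t_n), e_p^n) + b(u^n;e_p^n, e_p^n)$, where the first difference is controlled by~\eqref{eq:boundB2} through $L_b\,\|\bar p(t_n)\|_\Q\,\|e_u^n\|_\V\,\|e_p^n\|_\Q$ and the second is nonnegative.

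The step I expect to be the main obstacle is the increment-increment coupling term $d(e_u^n - e_u^{n-1}, e_p^n - e_p^{n-1})$. It has no analogue in the continuous estimate of Proposition~\ref{prop:delayError}, where the corresponding coupling terms cancel identically, and it is the genuine price of the semi-explicit (delay) structure. The decisive observation is that the implicit Euler method has left behind precisely the two numerical dissipation terms $\tfrac12\|e_u^n - e_u^{n-1}\|_a^2$ and $\tfrac12\|e_p^n - e_p^{n-1}\|_c^2$. Estimating the coupling term by the first continuity bound of~$d$ and the norm equivalences, $d(e_u^n - e_u^{n-1}, e_p^n - e_p^{n-1}) \le \tfrac{C_d}{\sqrt{c_a c_c}}\,\|e_u^n - e_u^{n-1}\|_a\,\|e_p^n - e_p^{n-1}\|_c$, the weak coupling condition $C_d^2 \le c_a c_c$ of Assumption~\ref{ass:weakCoupling} gives $\tfrac{C_d}{\sqrt{c_a c_c}} \le 1$, so that a single Young inequality lets the coupling term be absorbed \emph{exactly} into these two dissipation terms. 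This is the one place where Assumption~\ref{ass:weakCoupling} is used in an essential and sharp way.

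It then remains to close the estimate. I would bound the nonlinear contribution by a weighted Young inequality, $L_b\,\|\bar p(t_n)\|_\Q\,\|e_u^n\|_\V\,\|e_p^n\|_\Q \le \tfrac{c_b}{4}\,\|e_p^n\|_\Q^2 + \tfrac{L_b^2}{c_b}\,\|\bar p\|_{L^\infty(\Q)}^2\,\|e_u^n\|_\V^2$, use $\|e_u^n\|_\V^2 \le c_a^{-1}\|e_u^n\|_a^2$, and treat $\tau\,\rho^n(e_p^n)$ analogously, so that the two resulting pressure terms are absorbed by $\tau\, c_b\,\|e_p^n\|_\Q^2$ from the coercivity. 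With the energy $E^n := \tfrac12\|e_u^n\|_a^2 + \tfrac12\|e_p^n\|_c^2$ this produces the one-step inequality $(1 - C_2\tau)\,E^n + \tfrac{\tau c_b}{2}\|e_p^n\|_\Q^2 \le E^{n-1} + \tfrac{\tau}{c_b}\|\rho^n\|_{\Q^*}^2$ with $C_2 = \tfrac{2 L_b^2}{c_a c_b}\|\bar p\|_{L^\infty(\Q)}^2$. The smallness condition~\eqref{eq:boundTau}, which is exactly $C_2\tau < 1$, keeps the factor $1 - C_2\tau$ positive; summing over $n = 1,\dots,m$, inserting the consistency bound (which contributes $\tau^2 C_1 t_m$), and applying a discrete Grönwall inequality then yields $E^m \le \tau^2\,\tfrac{C_1}{C_2}(\exp(C_2 t_m) - 1)$. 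Passing from the $a$- and $c$-norms back to the $\V$- and $\cHQ$-norms gives the first asserted bound, and the second follows by discarding $E^m \ge 0$ and estimating the accumulated $\sum_{k} \tau\|e_p^k\|_\Q^2$ by the same Grönwall argument, which produces the factor $t_n \exp(C_2 t_n)$.
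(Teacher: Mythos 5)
Your proposal is correct and follows essentially the same route as the paper's proof: the same error splitting, the same test functions ($v$ the displacement increment, $q$ the pressure error), the same cancellation leaving only the increment--increment coupling term $d(e_u^n-e_u^{n-1},e_p^n-e_p^{n-1})$, which is absorbed into the numerical dissipation of the implicit Euler telescoping identities exactly via Assumption~\ref{ass:weakCoupling}, the same splitting of the nonlinearity using~\eqref{eq:boundB2} and coercivity, and the same discrete Gr\"onwall step under~\eqref{eq:boundTau}. The only differences are cosmetic (you package the consistency error as a single residual functional $\rho^n$ and write the coupling constant as $C_d/\sqrt{c_a c_c}\le 1$ rather than $C_d^2/(c_a c_c)\le 1$).
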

\begin{remark}\label{rem:tau}
The step size restriction~\eqref{eq:boundTau} is solely dependent on the problem at hand. In particular, this condition is \emph{not} a CFL-type condition that would couple $\tau$ to underlying spatial discretization parameters.
Moreover, assuming sufficiently smooth data and a so-called \emph{splicing condition}, it can be shown that~$\bar p$ is bounded independently of~$\tau$; cf.~\cite[App.]{AltMU21}. Hence, \eqref{eq:boundTau} displays a well-defined condition. 
\end{remark}
\begin{proof}[Proof of \Cref{prop:discError:Delay}] 
The proof is based on~\cite{AltMU21} and follows the ideas of~\cite{ErnM09}. We set 
\begin{displaymath}
\eta_u^n \vcentcolon= \bar u^n - u^n \in \V\qquad\text{and}\qquad
\eta_p^n \vcentcolon= \bar  p^n - p^n \in \Q 
\end{displaymath}
as well as
\begin{displaymath}
\theta^{n+1}_u \vcentcolon= \bar u^{n+1} - \bar u^n - \tau \dot{\bar u}^{n+1}\in \V \qquad\text{and}\qquad
\theta^{n+1}_p \vcentcolon= \bar p^{n+1} -\bar p^n -\tau \dot{\bar p}^{n+1}\in \Q, 
\end{displaymath}
where $\bar u^{n}\vcentcolon=\bar u(t_{n})$ and $\bar p^{n} \vcentcolon=\bar p(t_{n})$ are the (pointwise) solutions of~\eqref{eqn:delay:B} and $(u^n, p^n)$ the discrete solution of~\eqref{eq:poroSemiExpl} at time point $t_n = \tau n$. 
By the assumption on the initial data, we have~$\eta_u^0=0$ and~$\eta_p^0=0$. 
Using~\eqref{eq:poroSemiExpl:a} and~\eqref{eqn:delay:B:a}, we immediately obtain
\begin{equation}\label{eqn:discError:Delay:B:proof1}
\begin{aligned}
a(\eta^{n+1}_u,v) - d(v,\eta^{n+1}_p) 
&= a(\bar u^{n+1}-u^{n+1},v) - d(v,\bar p^{n}-p^{n}) - d(v,\eta^{n+1}_p- \eta^{n}_p)\\
&= - d(v,\eta^{n+1}_p- \eta^{n}_p)
\end{aligned}
\end{equation}
for all $v\in \V$. 
Further, it holds that
\begin{equation}\label{eqn:discError:Delay:B:proof2}
\begin{aligned}
\tau\, b(u^{n+1};\eta^{n+1}_p,q) 
= \tau\, b(\bar u^{n+1}&;\bar p^{n+1},q) - \tau\, b(u^{n+1};p^{n+1},q) 
\\ &\quad+ \tau\big[b(u^{n+1};\bar p^{n+1},q) - b(\bar u^{n+1};\bar p^{n+1},q)\big]
\end{aligned}
\end{equation}
for all $q \in \Q$.
With~\eqref{eq:poroSemiExpl:b}, \eqref{eqn:delay:B:b}, and~\eqref{eqn:discError:Delay:B:proof2}, we have that 
\begin{align}
d(&\eta^{n+1}_u - \eta^{n}_u,q) + c(\eta^{n+1}_p - \eta^n_p,q) + \tau\, b(u^{n+1};\eta^{n+1}_p,q) \nonumber\\
&\ = d(\bar u^{n+1} - \bar u^n - \tau D_\tau u^{n+1},q) + c(\bar p^{n+1} - \bar p^n - \tau D_\tau p^{n+1},q) + \tau\, b(u^{n+1};\eta^{n+1}_p,q)\label{eqn:discError:Delay:B:proof3}\\
%
%
%
&\ = d(\theta^{n+1}_u,q) + c(\theta^{n+1}_p,q) + \tau\, \big[b(u^{n+1};\bar p^{n+1},q) - b(\bar u^{n+1};\bar p^{n+1},q)\big]\nonumber
\end{align}
for all $q\in \Q$. 
Summing up \eqref{eqn:discError:Delay:B:proof1} and \eqref{eqn:discError:Delay:B:proof3} for the particular choices~$v = \eta^{n+1}_u - \eta^n_u$ and $q = \eta^{n+1}_p$, we obtain 
\begin{equation}\label{eqn:discError:Delay:B:proof4}
\begin{aligned}
a(\eta^{n+1}_u,\eta^{n+1}_u& - \eta^n_u) + c(\eta^{n+1}_p-\eta^n_p,\eta^{n+1}_p) + \tau\, b(u^{n+1};\eta^{n+1}_p,\eta^{n+1}_p)\\ 
&= -d(\eta^{n+1}_u - \eta^n_u,\eta^{n+1}_p - \eta^{n}_p) + d(\theta^{n+1}_u,\eta^{n+1}_p) + c(\theta^{n+1}_p,\eta^{n+1}_p)\\
&\hspace{3cm}+ \tau\, \big[b(u^{n+1};\bar p^{n+1},\eta^{n+1}_p) - b(\bar u^{n+1};\bar p^{n+1},\eta^{n+1}_p)\big].
\end{aligned}
\end{equation}
In the following, we apply the identity~$2\, a(u, u-v) = \Vert u \Vert^2_a - \Vert v \Vert^2_a + \Vert u- v \Vert^2_a$ and the corresponding formula for the bilinear form~$c$. Hence, with~\eqref{eqn:discError:Delay:B:proof4} and Assumption~\ref{ass:b} we further get 
\begin{align}
\|\eta^{n+1}_u\|^2_a - \|\eta^n_u\|^2_a& + \|\tau D_\tau\eta^{n+1}_u\|^2_a + \|\eta^{n+1}_p\|^2_c - \|\eta^n_p\|^2_c + \|\tau D_\tau\eta^{n+1}_p\|^2_c + 2c_b\tau\, \|\eta^{n+1}_p\|^2_\Q \notag  \\
&\le -2\, d(\tau D_\tau\eta^{n+1}_u, \tau D_\tau\eta^{n+1}_p) + 2\, d(\theta^{n+1}_u,\eta^{n+1}_p) + 2\, c(\theta^{n+1}_p,\eta^{n+1}_p) \label{eqn:discError:Delay:B:proof5} \\
&\hspace{2.5cm}+ 2\tau\,\big[b(u^{n+1};\bar p^{n+1},\eta^{n+1}_p) - b(\bar u^{n+1};\bar p^{n+1},\eta^{n+1}_p)\big]. \notag 
\end{align}
Next, we consider a weighted version of Young's inequality, which gives 
\begin{equation}\label{eqn:discError:Delay:B:proof6}
-2\, d(\tau D_\tau \eta^{n+1}_u,\tau D_\tau\eta^{n+1}_{p}) \leq \tfrac{C_{d}^2}{c_a\,c_c}\|\tau D_\tau \eta^{n+1}_u\|_a^2 + \|\tau D_\tau\eta^{n+1}_{p}\|_c^2.
\end{equation}
For the second and third term on the right-hand side of \eqref{eqn:discError:Delay:B:proof5} we similarly obtain 
\begin{equation}\label{eqn:discError:Delay:B:proof7}
2\, d(\theta^{n+1}_u,\eta^{n+1}_p) 
\leq 2\, \tilde C_d\, \|\theta^{n+1}_u\|_{\cHV} \,\|\eta^{n+1}_p\|_{\Q}
\leq \tfrac{\tilde C_d^2}{c_b}\tfrac{4}{\tau}\|\theta^{n+1}_u\|_{\cHV}^2 + \tfrac{c_b\tau}{4} \|\eta^{n+1}_p\|^2_{\Q}
\end{equation}
and with the continuity constant~$\CQtoH$ of the embedding~$\Q\hook \cHQ$, 
\begin{equation}\label{eqn:discError:Delay:B:proof8}
2\, c(\theta^{n+1}_p,\eta^{n+1}_p) 
\leq 2\,C_c \|\theta^{n+1}_p\|_\cHQ \, \|\eta^{n+1}_p\|_\cHQ 
\leq \tfrac{C_c^2\, \CQtoHsquare}{c_b}\tfrac{4}{\tau} \|\theta^{n+1}_p\|^2_\cHQ + \tfrac{c_b\tau}{4} \|\eta^{n+1}_p\|^2_\Q. 
\end{equation}
For the last term of the right-hand side in~\eqref{eqn:discError:Delay:B:proof5}, we apply once more the weighted version of Young's inequality. This leads to the estimate 
\begin{align}\label{eqn:discError:Delay:B:proof9}
2\tau\,\big[b(u^{n+1};\bar p^{n+1},\eta^{n+1}_p) - b(\bar u^{n+1};\bar p^{n+1},\eta^{n+1}_p)\big] 
&\le 2\tau L_b \|\bar p^{n+1}\|_\Q \|\eta^{n+1}_p \|_\Q \| \eta_u^{n+1} \|_\V \notag \\
&\leq \tfrac{c_b\tau}{2} \|\eta_p^{n+1}\|^2_\Q + \tfrac{2\tau L^2_b}{c_a c_b} \|\bar p^{n+1}\|_\Q^2\, \|\eta_u^{n+1}\|_a^2. 
\end{align}
We now combine the estimates~\eqref{eqn:discError:Delay:B:proof5}--\eqref{eqn:discError:Delay:B:proof9} and absorb the terms $\|\tau D_\tau\eta^{n+1}_p\|_c^2$, $c_b\tau\, \|\eta^{n+1}_p\|_{\Q}$, and $\|\tau D_\tau\eta^{n+1}_u\|^2_a$. For the latter, we use \Cref{ass:weakCoupling}. 
In total, this yields
\begin{align*}
\|\eta^{n+1}_u\|^2_a - \|\eta^n_u\|^2_a& + \|\eta^{n+1}_p\|^2_c - \|\eta^n_p\|^2_c + c_b\tau\, \| \eta^{n+1}_p\|^2_\Q \notag \\
&\leq \tau\, \tfrac{2 L^2_b}{c_a c_b} \|\bar p^{n+1}\|_\Q^2\, \|\eta_u^{n+1}\|_a^2
+ \tfrac{4}{c_b \tau}\, \Big( \tilde C_d^2\, \|\theta^{n+1}_u\|_{\cHV}^2 
+ C_c^2\, \CQtoHsquare\, \|\theta^{n+1}_p\|^2_\cHQ \Big).  
\end{align*}
Applying the \emph{mean value theorem}, we estimate
\begin{equation*}
\|\theta^{n+1}_u\|_{\cHV} 
\leq\tau^2\, \|\ddot{\bar u}\|_{L^\infty(\cHV)}
\qquad \text{and}\qquad \|\theta^{n+1}_p\|_\cHQ 
\leq \tau^2\, \|\ddot{\bar p}\|_{L^\infty(\cHQ\!)}.
\end{equation*}
Hence, we obtain 
\begin{align}
\label{eqn:estimateEtaAtN}
  \|\eta^{n+1}_u\|^2_a - \|\eta^n_u\|^2_a& + \|\eta^{n+1}_p\|^2_c - \|\eta^n_p\|^2_c + c_b\tau\, \| \eta^{n+1}_p\|^2_\Q 
  \le \tau\, C_2\, \|\eta_u^{n+1}\|_a^2 + \tau^3\, C_1  
\end{align}
with constants
\[
  C_1 := \tfrac{4}{c_b}\, \tilde C_d^2\, \|\ddot{\bar u}\|_{L^\infty(\cHV)} 
  + \tfrac{4}{c_b}\, C_c^2\, \CQtoHsquare\, \|\ddot{\bar p}\|_{L^\infty(\cHQ\!)}
  \qquad\text{and}\qquad
  C_2 := \tfrac{2 L^2_b}{c_a c_b} \|\bar p\|_{L^\infty(\Q)}^2.
\]
Due to the assumption on the step size~\eqref{eq:boundTau}, which now reads~$\tau\, C_2 < 1$, we can apply a discrete Gr\"onwall inequality to~\eqref{eqn:estimateEtaAtN}, see~\cite[Prop.~3.1]{Emm99}. This yields 
\begin{align*}
  \|\eta^{n}_u\|^2_a + \|\eta^{n}_p\|^2_c 
  \le \tau^2\, \frac{C_1}{C_2} \big(\! \exp(C_2 t_n) - 1 \big).
\end{align*}
On the other hand, the summation of~\eqref{eqn:estimateEtaAtN} over~$n$ (using $\eta_u^0=0$ and~$\eta_p^0=0$) and using the previous estimate yields
\begin{align}
  c_b\tau\, \sum^n_{k=1} \| \eta^{k}_p\|^2_\Q 
  \le \tau\, C_2\, \sum^n_{k=1} \|\eta_u^{k}\|_a^2 + \tau^2\, C_1\, t_n 
  \le \tau^2\, C_1\, t_n\, \exp(C_2 t_n)
\end{align}
and hence the assertion.  
\end{proof}
\begin{remark}
In the case of non-homogeneous boundary conditions, estimate~\eqref{eqn:discError:Delay:B:proof7} needs to be adjusted. More precisely, $\theta_u^{n+1}$ needs to be measured in the $\V$-norm, leading to a constant $C_1$, which depends on~$\|\ddot{\bar u}\|_{L^\infty(\mathcal V)}$ rather than $\|\ddot{\bar u}\|_{L^\infty(\mathcal H_{\mathcal V})}$. 
\end{remark}
%
%
\subsection{Summary and main result}
For the main result, which states first-order convergence of the semi-explicit method for poroelasticity with nonlinear permeability, we combine the two previous propositions. This then leads to the following statement. 
\begin{theorem}[Convergence of the semi-explicit scheme]
\label{thm:convergence}
Consider the setting of Section~\ref{sec:poroNL:linear} with $b$ defined in~\eqref{def:nonlinearB} as well as Assumptions~\ref{ass:weakCoupling}, \ref{ass:b}, and~\ref{ass:p}. 
Let~$(u, p)$ denote the exact solution to~\eqref{eq:poroNonlinear} with $p(0)=p^0\in\Q$ and~$(u^n,p^n)$ the sequence resulting from~\eqref{eq:poroSemiExpl} for $n \le T/\tau$ with the same initial data. 
If the step size satisfies the restriction~\eqref{eq:boundTau}, then there exists a constant $C$ (depending on $p$ and $\bar p$) such that
%
\[
  \|u(t_n) - u^n \|^2_{\V} + \|p(t_n) - p^n \|^2_{\cHQ}
  \ \lesssim\ \tau^2\, (1+t_n)\, \exp(C\,t_n).
\]
\end{theorem}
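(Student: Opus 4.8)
The plan is to use the delay solution $(\bar u, \bar p)$ as an intermediate quantity and to combine the two preceding propositions by a triangle inequality. The underlying observation is that \Cref{prop:delayError} controls the distance between the exact solution and the delay solution, while \Cref{prop:discError:Delay} controls the distance between the delay solution and the discrete iterates, and that both of these distances are of order $\tau$. Thus, writing $u(t_n) - u^n = \big(u(t_n) - \bar u(t_n)\big) + \big(\bar u(t_n) - u^n\big)$ and analogously for the pressure, I would immediately reduce the statement to the superposition of two estimates that are already at my disposal.

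Concretely, I would apply the elementary inequality $(a+b)^2 \le 2a^2 + 2b^2$ to each of the two norms on the left-hand side. This splits the quantity to be bounded into twice a \emph{delay-to-exact} contribution, $\|u(t_n) - \bar u(t_n)\|^2_\V + \|p(t_n) - \bar p(t_n)\|^2_{\cHQ}$, plus twice a \emph{discrete-to-delay} contribution, $\|\bar u(t_n) - u^n\|^2_\V + \|\bar p(t_n) - p^n\|^2_{\cHQ}$. The first contribution is bounded directly by \Cref{prop:delayError}, evaluated at $t = t_n$ and discarding the nonnegative integral term, giving a bound of the form $c_0^{-1}\,\tau^2\,(C_1 + C_2 t_n)\exp(C_3 t_n)$. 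The second contribution is bounded by the first estimate of \Cref{prop:discError:Delay}, namely $\tau^2\,(C_1/C_2)\big(\exp(C_2 t_n) - 1\big)$; here the step-size restriction \eqref{eq:boundTau}, equivalently $\tau C_2 < 1$, is precisely what licenses the discrete Grönwall argument behind that proposition, and \Cref{ass:p} guarantees that the norms $\|\ddot{\bar u}\|_{L^\infty(\cHV)}$, $\|\ddot{\bar p}\|_{L^\infty(\cHQ)}$, and $\|\bar p\|_{L^\infty(\Q)}$ entering those constants are finite.

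Adding the two bounds, every resulting term is $O(\tau^2)$ with a growth factor of the form $(1 + t_n)\exp(C t_n)$ or $\exp(C t_n)$. Choosing $C$ to be the larger of the two exponential rates and absorbing all remaining problem-dependent constants, which depend on $p$ through $\|p\|_{L^\infty(\Q)}$ and on $\bar p$ through its higher temporal norms, into the symbol $\lesssim$, I would arrive at the claimed estimate $\|u(t_n) - u^n\|^2_\V + \|p(t_n) - p^n\|^2_{\cHQ} \lesssim \tau^2\,(1 + t_n)\exp(C t_n)$. Since the argument is a pure combination of two already-proven estimates, I do not expect a genuinely hard step; the only points requiring care are the bookkeeping of the constants $C_1, C_2, C_3$, which are reused with different meanings in the two propositions, and verifying that the step-size condition \eqref{eq:boundTau} is in force so that \Cref{prop:discError:Delay} is applicable.
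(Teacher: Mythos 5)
Your proposal is correct and coincides with the paper's own argument: the proof there likewise combines \Cref{prop:delayError} and \Cref{prop:discError:Delay} via the triangle inequality, after noting that a history function satisfying~\eqref{eqn:history:B} can be chosen. No further comment is needed.
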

\begin{proof}
One can easily define a history function satisfying~\eqref{eqn:history:B}. 
The assertion directly follows from the two previous Propositions~\ref{prop:delayError} and~\ref{prop:discError:Delay} and the triangle inequality. 
\end{proof}
Recall that we have no pointwise estimates of the pressure variable in the $\Q$-norm. To get an estimate in~$L^2(0,T;\Q)$, let us define the piecewise constant function~$P\in L^2(0,T;\Q)$ by
\[
  P(0) := p^0, \qquad
  P(t) := p^k \text{ for } t\in(t_{k-1}, t_k]. 
\]  
Then the combination of Propositions~\ref{prop:delayError} and~\ref{prop:discError:Delay} shows 
\begin{equation*}
  \| p - P \|_{L^2(0,t_n;\Q)}^2 
  = \int_{0}^{t_n} \| p(t) - P(t) \|_{\Q}^2 \dt
  \lesssim \tau^2\, (1+t_n)\, \exp(C\,t_n).
\end{equation*}
%
%
\begin{remark}[spatial discretization]
For practical computations, one would consider discrete spaces $V_h\subseteq \V$ and $Q_h\subseteq \Q$, leading to approximations~$u^n_h \approx u(t_n)$ and~$p^n_h \approx p(t_n)$. Then, a similar convergence result can be shown based on spatial projections corresponding to $a$ and $b$, namely $\Ru \colon \V\to  V_h$ and (for a fixed $w\in\V$) $\Rp^w \colon \Q \to Q_h$, defined by 
\[
  a(\Ru u,v_h) = a(u,v_h), \qquad
  b(w; \Rp^w p, q_h) = b(w;p,q_h)
\]
for all $v_h \in V_h$, $q_{h} \in Q_{h}$. 
Assuming approximation properties  
\[
  \|u - \Ru u\|_\cHV \lesssim h\,\|u\|_\V, \quad
  \|u - \Ru u\|_\V \lesssim h\,\|\nabla^2 u\|_\cHV
\]
and 
\[
  \|p - \Rp^w p\|_\cHQ \lesssim h\,\|p\|_\Q, \quad
  \|p - \Rp^w p\|_\Q \lesssim h\,\|\nabla^2 p\|_\cHQ,\quad
  \|p - \Rp^w p\|_\cHQ \lesssim h^2\,\|\nabla^2 p\|_\cHQ, 
\]
uniformly in $w$, we obtain an error estimate of the form 
\begin{align*}
  \| u(t_n)- u^n_h\|^2_a + \| p(t_n) - p^n_h\|^2_c  
  \lesssim  (\tau^2 + h^2 + h^4/\tau)\, \exp(C\, t_n).
\end{align*} 
Hence, in the (not critical) regime $h^2\lesssim \tau$ we have first-order convergence in $\tau$ and $h$. 
For the convergence proof one considers the (discrete) error terms~$\eta_u^n \vcentcolon= \Ru \bar u^n - u^n_h \in V_h$ and $\eta_p^n \vcentcolon= \Rp^{u^n_h}\bar p^n - p^n_h \in Q_h$. Then, calculations similar to the ones presented in the proof of~\Cref{prop:discError:Delay} combined with the assumed approximation properties of~$\Ru$ and~$\Rp^w$ as well as a step size restriction in the spirit of~\eqref{eq:boundTau} yield the result. 
\end{remark}
We now turn to the numerical investigation of the semi-explicit scheme, considering three test cases. 
%
%
%
\section{Numerical Examples}\label{sec:num}
Besides the numerical validation of the obtained convergence rates, this section is devoted to the following questions: 
\begin{itemize}
	\item performance of the semi-explicit scheme compared to implicit schemes, 
	\item necessity of the weak coupling condition given in Assumption~\ref{ass:weakCoupling}.
\end{itemize}
All computations are based on a finite element implementation in Python based on the computing platform \emph{FEniCS}. Throughout this section, $h$ denotes the spatial discretization parameter that corresponds to a classical first-order finite element approximation on a regular mesh. Moreover, we use equidistant time steps as mentioned in Section~\ref{sec:discretization}. 
%
%
\subsection{Network-inspired model}\label{ss:network} 
In this first example, we consider the network-inspired porosity-permeability relation as presented in Example~\ref{exp:networkInspiredPermeability} with
\begin{equation*}
\rho_0 = 0.4,\quad \hat\rho = 0.2,\quad 
\delta = 0.01, \quad T = 1.
\end{equation*}
As computational domain we use the unit square $D=(0,1)^2$. The remaining parameters are based on the values for \emph{Boise sandstone} (see~\cite[Sect.~3.3.4]{DetC93}) and are given by 
\begin{equation*}
\lambda = 7.826\cdot10^{8}, \quad 
\mu = 1.826\cdot 10^{9}, \quad 
\alpha = 0.85, \quad 
M = 7\cdot 10^{9},\quad 
\kappa_0/\nu = 8\cdot 10^{-10}.
\end{equation*}
The right-hand sides and the initial condition are given by $f \equiv 0$,
\begin{align*}
g(x,t) &= 30\,\sin(\pi x_1)\,\exp(-t),\\ 
p^0(x) &= 50\,(1-x_1)\, x_1\, (1-x_2)\, x_2.
\end{align*}
We emphasize that the coupling condition from~\Cref{ass:weakCoupling} is slightly violated for this setup, since $C_d^2 = \alpha^2 = 0.725$ and $c_a\,c_c = \mu/M = 0.261$. This, however, is not critical for the stability in this example. 

To investigate the performance of the semi-explicit scheme, we compare the results for multiple mesh sizes~$h$ and time step sizes~$\tau$ with a reference solution computed with $h_\mathrm{ref} = 2^{-7}$ and $\tau_\mathrm{ref} = 2^{-8}$ and the implicit scheme \eqref{eq:poroImplEuler:Picard}, where the inner iteration is handled with a Picard-type approach. The inner iteration stops when a relative residual error of $10^{-9}$ is reached. For this model, we have observed that at most seven Picard steps were needed to converge for each point in time.

For a numerical comparison, we also present the approximations for different discretization parameters with an implicit approach coupled with an inner Picard iteration as for the reference solution. The stopping criterion for the inner iteration is a relative residual error of $10^{-9}$ and at most nine Picard steps were needed to reach this threshold. In Figure~\ref{fig:ex1_t}, we present the errors at the final point in time $t=T=t_N$ measured in the norms $\|\cdot\|_a$ (equivalent to the $\V$-norm) and $\|\cdot\|_c$ (equivalent to the $\cHQ$-norm) for the variables $u$ and $p$, respectively. 
The mesh size $h = 2^{-7}$ is chosen such that the temporal error dominates. Based on the above theory, we expect linear convergence in $\tau$ for the semi-explicit scheme. This is observed for both variables $p$ and $u$. Note that the errors of the implicit scheme combined with a Picard iteration show the same convergence rate but are smaller by a factor 10 for $p$ and 100 for $u$. 
We would like to emphasize, however, that the semi-explicit scheme is very fast compared to the implicit one, which outweighs the higher error; cf.~also the comparison in run times in Section~\ref{ss:KozenyCarman}. Let us also mention that the errors in the $\cHV$-norm (for $u$) and the $\Q$-norm (for $p$) show a very similar linear behavior in $\tau$ as well. 

\begin{figure}
	\scalebox{0.85}{
\begin{tikzpicture}

\begin{axis}[
legend cell align={left},
legend columns = 3,
legend style={at={(0.35,1.15)}, anchor=north west, draw=white!80.0!black},
log basis x={10},
log basis y={10},
tick align=outside,
tick pos=both,
x grid style={white!69.01960784313725!black},
xlabel={time step \(\displaystyle \tau\)},
xmin=0.0063457218465331, xmax=0.615572206672458,
xmode=log,
xtick style={color=black},
y grid style={white!69.01960784313725!black},
ylabel={$\|p(T)-p_h^N\|_{c}\  /\ \|p(T)\|_{c}$},
yticklabels = {},
y label style={at={(.07,0.5)}},
ymin=0.000019, ymax=0.85,
ymode=log,
ytick style={color=black}
]
\addplot [very thick, color0!70!white, mark=o, mark size=4.5, mark options={solid}]
table {%
0.5 0.021603010749598
0.25 0.0152839761558142
0.125 0.00716181792478684
0.0625 0.00340437658944168
0.03125 0.00165177739681034
0.015625 0.000804993360684104
0.0078125 0.000388788942122177
};
\addlegendentry{semi-explicit Euler$\qquad$}

\addplot [very thick, color3, dashed, mark=triangle*, mark size=3.5, mark options={solid}]
table {%
0.5 0.00247722795754455
0.25 0.00188545230067744
0.125 0.00088710289593823
0.0625 0.000418217298119111
0.03125 0.000192690743536223
0.015625 8.20607575064487e-05
0.0078125 2.72675466662299e-05
};
\addlegendentry{implicit Euler$\qquad$}

\addplot [very thick, gray, dashed]
table {%
	0.5 0.16
	0.005 0.0016
};
\addlegendentry{order 1}

\end{axis}

\end{tikzpicture}
		\hspace{-6.5cm}
\begin{tikzpicture}

\begin{axis}[
log basis x={10},
log basis y={10},
tick align=outside,
tick pos=both,
x grid style={white!69.01960784313725!black},
xlabel={time step \(\displaystyle \tau\)},
xmin=0.0063457218465331, xmax=0.615572206672458,
xmode=log,
xtick style={color=black},
y grid style={white!69.01960784313725!black},
ylabel={$\|u(T)-u_h^N\|_{a}\  /\ \|u(T)\|_{a}$},
y label style={at={(1.28,0.5)}},
ymin=0.000019, ymax=0.85,
ymode=log,
ytick style={color=black}
]

\addplot [very thick, color0!70!white, mark=o, mark size=4.5, mark options={solid}]
table {%
0.5 0.599505326310726
0.25 0.290668312035225
0.125 0.136471455912541
0.0625 0.0658750741454815
0.03125 0.0323616022578886
0.015625 0.0160293904546532
0.0078125 0.00796667346145882
};

\addplot [very thick, color3, dashed, mark=triangle*, mark size=3.5, mark options={solid}]
table {%
0.5 0.00251787836692146
0.25 0.00195814603419357
0.125 0.000922835289384071
0.0625 0.000435334912561251
0.03125 0.000200637287592462
0.015625 8.54573975654814e-05
0.0078125 2.83982693905334e-05
};

\addplot [very thick, gray, dashed]
table {%
	0.5 0.16
	0.005 0.0016
};
\end{axis}

\end{tikzpicture}
	}
	\caption{Relative error in $p$ (left, measured in the $c$-norm) and $u$ (right, measured in the $a$-norm) in the example of Section~\ref{ss:network} at the final time $T$ for fixed $h = 2^{-7}$ and varying $\tau$. 
	}
	\label{fig:ex1_t}
\end{figure}
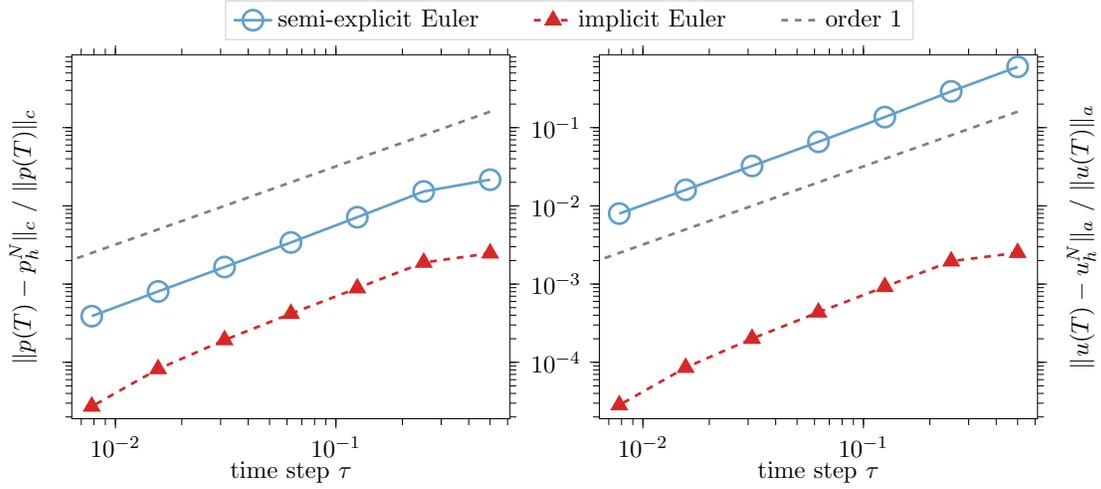

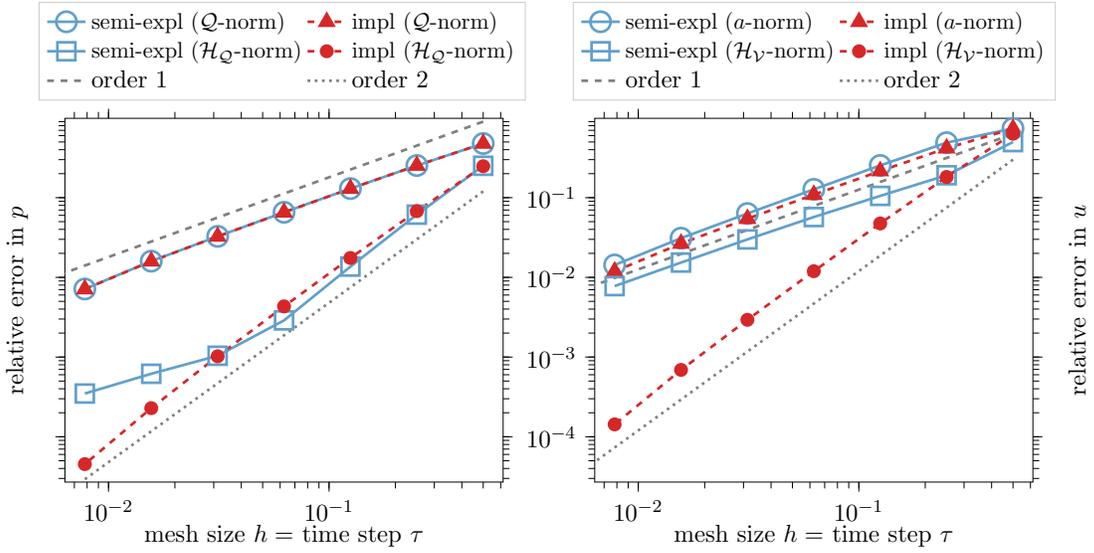
\begin{figure}
	\scalebox{0.85}{
\begin{tikzpicture}

\begin{axis}[
legend cell align={left},
legend columns = 2,
legend style={at={(-0.06,1.32)}, anchor=north west, draw=white!80.0!black},
log basis x={10},
log basis y={10},
tick align=outside,
tick pos=both,
x grid style={white!69.01960784313725!black},
xlabel={mesh size~$h=$ time step $\tau$},
xmin=0.0063457218465331, 
xmax=0.615572206672458,
xmode=log,
xtick style={color=black},
y grid style={white!69.01960784313725!black},
ylabel={relative error in $p$},
yticklabels = {},
y label style={at={(.07,0.5)}},
ymin=0.000027, ymax=0.99,
ymode=log,
ytick style={color=black}
]
\addplot [very thick, color0!70!white, mark=o, mark size=4.5, mark options={solid}]
table {%
0.5 0.478017183019004
0.25 0.253040343893049
0.125 0.130016064657321
0.0625 0.0655740192793406
0.03125 0.0326934019242939
0.015625 0.0159684606317647
0.0078125 0.00714879550731608
};
\addlegendentry{\small semi-expl ($\Q$-norm)}

\addplot [very thick, color3, dashed, mark=triangle*, mark size=3.5, mark options={solid}]
table {%
0.5 0.477982978231281
0.25 0.252979920221976
0.125 0.129771536601947
0.0625 0.0653922777359294
0.03125 0.0325897482774623
0.015625 0.0159132334316036
0.0078125 0.00711785140159712
};
\addlegendentry{\small impl ($\Q$-norm)}

\addplot [very thick, color0!70!white, mark=square, mark size=4.0, mark options={solid}]
table {%
0.5 0.251081258610354
0.25 0.0612436085222484
0.125 0.0136470266258805
0.0625 0.00288429469527747
0.03125 0.00103825824982263
0.015625 0.00061695381968987
0.0078125 0.000348508063831516
};
\addlegendentry{\small semi-expl ($\cHQ$-norm)}

\addplot [very thick, color3, dashed, mark=*, mark size=2.5, mark options={solid}]
table {%
0.5 0.24801276559204
0.25 0.0677780598465261
0.125 0.0175226176558726
0.0625 0.00432368350056836
0.03125 0.00102313266663078
0.015625 0.000228437953296862
0.0078125 4.53434100640046e-05
};
\addlegendentry{\small impl ($\cHQ$-norm)}

\addplot [very thick, gray, dashed]
table {%
0.5 0.9
0.005 0.009
};
\addlegendentry{order 1}

\addplot [very thick, gray, dotted]
table {%
0.5 0.12
0.005 0.000012
};
\addlegendentry{order 2}

\end{axis}
\end{tikzpicture}
		\hspace{-1.3em}
\begin{tikzpicture}

\begin{axis}[
legend cell align={left},
legend columns = 2,
legend style={at={(-0.05,1.32)}, anchor=north west, draw=white!80.0!black},
log basis x={10},
log basis y={10},
tick align=outside,
tick pos=both,
x grid style={white!69.01960784313725!black},
xlabel={mesh size~$h=$ time step $\tau$},
xmin=0.0063457218465331, 
xmax=0.615572206672458,
xmode=log,
xtick style={color=black},
y grid style={white!69.01960784313725!black},
ylabel={relative error in $u$},
y label style={at={(1.28,0.5)}},
ymin=0.000027, ymax=0.99,
ymode=log,
ytick style={color=black}
]
\addplot [very thick, color0!70!white, mark=o, mark size=4.5, mark options={solid}]
table {%
0.5 0.739469100932092
0.25 0.489588997041763
0.125 0.253740930878212
0.0625 0.12715657433769
0.03125 0.0632473969242848
0.015625 0.0310238115775371
0.0078125 0.0143092752394005
};
\addlegendentry{\small semi-expl ($a$-norm)}

\addplot [very thick, color3, dashed, mark=triangle*, mark size=3.5, mark options={solid}]
table {%
0.5 0.734124937901243
0.25 0.417212452357853
0.125 0.215857007530659
0.0625 0.108945185949293
0.03125 0.0543588246369921
0.015625 0.0265634233245604
0.0078125 0.0118865310555705
};
\addlegendentry{\small impl ($a$-norm)}

\addplot [very thick, color0!70!white, mark=square, mark size=4.0, mark options={solid}]
table {%
0.5 0.498382837419683
0.25 0.191072804473293
0.125 0.105249319168778
0.0625 0.0571253226972161
0.03125 0.0299355018436014
0.015625 0.0153385577591744
0.0078125 0.0077663650831154
};
\addlegendentry{\small semi-expl ($\cHV$-norm)}

\addplot [very thick, color3, dashed, mark=*, mark size=2.5, mark options={solid}]
table {%
0.5 0.634187848736239
0.25 0.181820249551559
0.125 0.0474605347842002
0.0625 0.0119489442716283
0.03125 0.00293311174991634
0.015625 0.00068996014206436
0.0078125 0.000142909547137742
};
\addlegendentry{\small impl ($\cHV$-norm)}

\addplot [very thick, gray, dashed]
table {%
0.5 0.63
0.005 0.0063
};
\addlegendentry{order 1}

\addplot [very thick, gray, dotted]
table {%
0.5 0.3
0.005 0.00003
};
\addlegendentry{order 2}

\end{axis}
\end{tikzpicture}
	}
	\caption{Relative error in $p$ (left, measured in the $\Q$ and $\cHQ$-norm) and $u$ (right, measured in the $a$ and $\cHV$-norm) in the example of Section~\ref{ss:network} at the final time $T$ for varying $\tau = h$. 
	} 
	\label{fig:ex1_h}
\end{figure}

For completeness, we also present the errors of a simultaneous refinement in space and time in Figure~\ref{fig:ex1_h}. We observe first-order rates for both schemes in $u$ and $p$ when measured in the $H^1$-norms. In particular, the schemes are very close due to the fact that the spatial error is more dominant compared to the temporal one. In such a scenario, the semi-explicit scheme is very beneficial. For the weaker $L^2$-norms, the plots indicate second-order convergence which -- in the case of the semi-explicit scheme -- reduces to a first-order rate once the temporal error takes the dominant part. The results are in line with classical approximation results of first-order finite element approximations in space.
%
%
\subsection{Kozeny-Carman model}\label{ss:KozenyCarman}

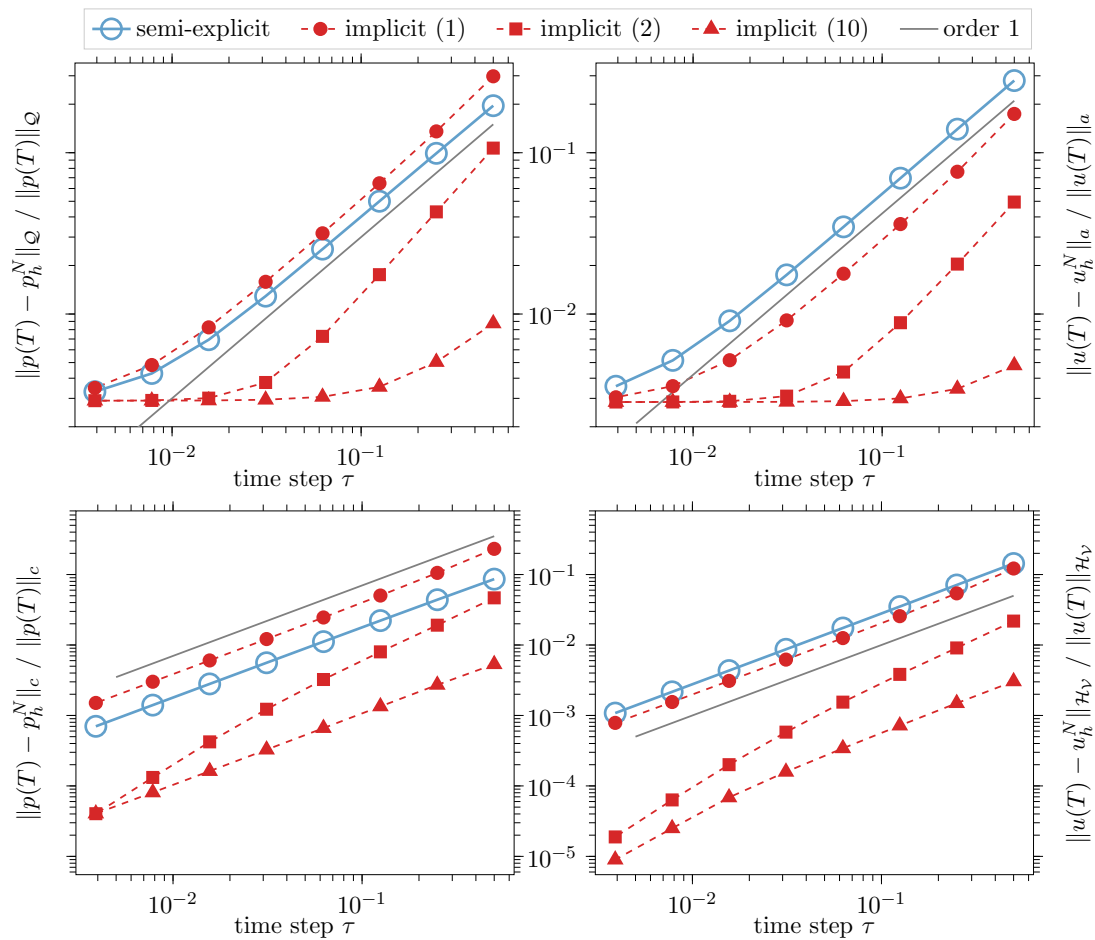
\begin{figure}
	\scalebox{0.85}{
\begin{tikzpicture}

\begin{axis}[
legend cell align={left},
legend columns = 5,
legend style={at={(0.02,1.15)}, anchor=north west, draw=white!80.0!black},
log basis x={10},
log basis y={10},
tick align=outside,
tick pos=both,
x grid style={white!69.01960784313725!black},
xlabel={time step \(\displaystyle \tau\)},
xmin=0.00306478163240919, xmax=0.637280313659631,
xmode=log,
xtick style={color=black},
y grid style={white!69.01960784313725!black},
ylabel={$\|p(T)-p_h^N\|_{\Q}\  /\ \|p(T)\|_{\Q}$},
yticklabels = {},
y label style={at={(.07,0.5)}},
ymin=0.002, ymax=0.36,
ymode=log,
ytick style={color=black}
]

\addplot [very thick, color0!70!white, mark=o, mark size=4.5, mark options={solid}]
table {%
0.5 0.195758890931729
0.25 0.0991649761588126
0.125 0.0500274728469483
0.0625 0.0252544207298423
0.03125 0.0129040685017434
0.015625 0.0069299128533811
0.0078125 0.00428026916188703
0.00390625 0.0032997849600618
};
\addlegendentry{semi-explicit$\quad$}

\addplot [thick, color3, dashed, mark=*, mark size=2.75, mark options={solid}]
table {%
0.5 0.297814500935466
0.25 0.135612449004966
0.125 0.0646603029987231
0.0625 0.0316569218659776
0.03125 0.0158396120480743
0.015625 0.00826536873853306
0.0078125 0.00482798659383292
0.00390625 0.00348324130556057
};
\addlegendentry{implicit (1)$\quad$}

\addplot [thick, color3, dashed, mark=square*, mark size=2.5, mark options={solid}]
table {%
0.5 0.106834241552282
0.25 0.0429845967113111
0.125 0.0175422812659384
0.0625 0.00727060085103093
0.03125 0.00376365942812409
0.015625 0.00301048786876026
0.0078125 0.00291237208077752
0.00390625 0.00290216551372455
};
\addlegendentry{implicit (2)$\quad$}

\addplot [thick, color3, dashed, mark=triangle*, mark size=3.5, mark options={solid}]
table {%
	0.5 0.00874122703573463
	0.25 0.00503923624101026
	0.125 0.00353346117604677
	0.0625 0.00306451150934983
	0.03125 0.0029412299380578
	0.015625 0.00291070085191509
	0.0078125 0.00290327776410781
	0.00390625 0.00290150653462787
};
\addlegendentry{implicit (10)$\quad$}

\addplot [thick, gray]
table {%
	0.5 0.15
	0.005 0.0015
};
\addlegendentry{order 1}
\end{axis}

\end{tikzpicture}
		\hspace{-8.4cm}
\begin{tikzpicture}

\begin{axis}[
legend cell align={left},
legend style={at={(0.03,0.97)}, anchor=north west, draw=white!80.0!black},
log basis x={10},
log basis y={10},
tick align=outside,
tick pos=both,
x grid style={white!69.01960784313725!black},
xlabel={time step \(\displaystyle \tau\)},
xmin=0.00306478163240919, xmax=0.637280313659631,
xmode=log,
xtick style={color=black},
y grid style={white!69.01960784313725!black},
ylabel={$\|u(T)-u_h^N\|_{a}\  /\ \|u(T)\|_{a}$},
y label style={at={(1.28,0.5)}},
ymin=0.002, ymax=0.36,
ymode=log,
ytick style={color=black}
]

\addplot [very thick, color0!70!white, mark=o, mark size=4.5, mark options={solid}]
table {%
0.5 0.280164407984489
0.25 0.140015431327361
0.125 0.0695417673656203
0.0625 0.0347065401789834
0.03125 0.0174871323161334
0.015625 0.00907337509985849
0.0078125 0.00515975109193557
0.00390625 0.0035670773429178
};

\addplot [thick, color3, dashed, mark=*, mark size=2.75, mark options={solid}]
table {%
0.5 0.174027799847367
0.25 0.0762515832240236
0.125 0.0361165300216456
0.0625 0.0177840369308914
0.03125 0.00913170739021825
0.015625 0.00517179864365144
0.0078125 0.00357214963247565
0.00390625 0.00304573306749834
};

\addplot [thick, color3, dashed, mark=square*, mark size=2.5, mark options={solid}]
table {%
0.5 0.0494552363057988
0.25 0.0203968939630028
0.125 0.00883272480589224
0.0625 0.00436835516652356
0.03125 0.00309762329881753
0.015625 0.00287705189256027
0.0078125 0.00284997965420781
0.00390625 0.0028472180605215
};

\addplot [thick, color3, dashed, mark=triangle*, mark size=3.5, mark options={solid}]
table {%
	0.5 0.00480287193700849
	0.25 0.00343644167187772
	0.125 0.00299699363338975
	0.0625 0.00288275030303821
	0.03125 0.00285520294457486
	0.015625 0.00284874070525421
	0.0078125 0.00284728857179397
	0.00390625 0.00284699867398304
};

\addplot [thick, gray]
table {%
	0.5 0.21
	0.005 0.0021
};
\end{axis}

\end{tikzpicture}
	}
	\vspace{0.3em}
	\scalebox{0.85}{
\begin{tikzpicture}

\begin{axis}[
legend cell align={left},
legend style={at={(0.03,0.97)}, anchor=north west, draw=white!80.0!black},
log basis x={10},
log basis y={10},
tick align=outside,
tick pos=both,
x grid style={white!69.01960784313725!black},
xlabel={time step \(\displaystyle \tau\)},
xmin=0.00306478163240919, xmax=0.637280313659631,
xmode=log,
xtick style={color=black},
y grid style={white!69.01960784313725!black},
ylabel={$\|p(T)-p_h^N\|_{c}\  /\ \|p(T)\|_{c}$},
yticklabels = {},
y label style={at={(.07,0.5)}},
ymin=5.5e-06, ymax=0.8,
ymode=log,
ytick style={color=black}
]
\addplot [very thick, color0!70!white, mark=o, mark size=4.5, mark options={solid}]
table {%
0.5 0.0860947789990272
0.25 0.0439178062850828
0.125 0.0221909431653462
0.0625 0.0111597759649673
0.03125 0.00559645273836576
0.015625 0.0028024705828593
0.0078125 0.00140237881528845
0.00390625 0.000701571255877558
};

\addplot [thick, color3, dashed, mark=*, mark size=2.75, mark options={solid}]
table {%
0.5 0.23159214930759
0.25 0.10560710234203
0.125 0.0503461632214214
0.0625 0.0245779473145363
0.03125 0.0121426320763344
0.015625 0.00603643439139513
0.0078125 0.00301121746149566
0.00390625 0.00150559331155746
};

\addplot [thick, color3, dashed, mark=square*, mark size=2.5, mark options={solid}]
table {%
0.5 0.0466989107754557
0.25 0.0191160822669004
0.125 0.00799319907106565
0.0625 0.0032380199758707
0.03125 0.00122520537052867
0.015625 0.000421940077423715
0.0078125 0.000131875056155365
0.00390625 4.03348517950064e-05
};

\addplot [thick, color3, dashed, mark=triangle*, mark size=3.5, mark options={solid}]
table {%
	0.5 0.00532907487108501
	0.25 0.00271550999502894
	0.125 0.00134264829467187
	0.0625 0.00066133893434869
	0.03125 0.000326939330547271
	0.015625 0.000162200654107671
	0.0078125 8.07107895865761e-05
	0.00390625 4.04925465982497e-05
};

\addplot [thick, gray]
table {%
	0.5 0.35
	0.005 0.0035
};
\end{axis}

\end{tikzpicture}
		\hspace{-1.2em}
\begin{tikzpicture}

\begin{axis}[
legend cell align={left},
legend style={at={(0.03,0.97)}, anchor=north west, draw=white!80.0!black},
log basis x={10},
log basis y={10},
tick align=outside,
tick pos=both,
x grid style={white!69.01960784313725!black},
xlabel={time step \(\displaystyle \tau\)},
xmin=0.00306478163240919, xmax=0.637280313659631,
xmode=log,
xtick style={color=black},
y grid style={white!69.01960784313725!black},
ylabel={$\|u(T)-u_h^N\|_{\cHV}\  /\ \|u(T)\|_{\cHV}$},
y label style={at={(1.28,0.5)}},
ymin=5.5e-06, ymax=0.8,
ymode=log,
ytick style={color=black}
]
\addplot [very thick, color0!70!white, mark=o, mark size=4.5, mark options={solid}]
table {%
0.5 0.143346683685183
0.25 0.071081202726522
0.125 0.0351117555691134
0.0625 0.0174374674319739
0.03125 0.00868790440010258
0.015625 0.00433544798771761
0.0078125 0.0021648586753855
0.00390625 0.00108105213856154
};

\addplot [thick, color3, dashed, mark=*, mark size=2.75, mark options={solid}]
table {%
0.5 0.121779298330972
0.25 0.0540777296347439
0.125 0.0256594124180353
0.0625 0.0125348511246793
0.03125 0.00620551538005323
0.015625 0.00309489639718068
0.0078125 0.00155262431124385
0.00390625 0.000784721480258117
};

\addplot [thick, color3, dashed, mark=square*, mark size=2.5, mark options={solid}]
table {%
0.5 0.021851791615831
0.25 0.00908470344962249
0.125 0.00382934375445533
0.0625 0.00154579039466402
0.03125 0.000579843710425998
0.015625 0.000200264536188307
0.0078125 6.32834267032144e-05
0.00390625 1.88797221410868e-05
};

\addplot [thick, color3, dashed, mark=triangle*, mark size=3.5, mark options={solid}]
table {%
	0.5 0.00304370437775387
	0.25 0.00149214928867459
	0.125 0.000717731946531882
	0.0625 0.00034151107818188
	0.03125 0.000158355328703976
	0.015625 6.85777385110321e-05
	0.0078125 2.50108881839797e-05
	0.00390625 8.98140582055686e-06
};

\addplot [thick, gray]
table {%
	0.5 0.05
	0.005 0.0005
};
\end{axis}

\end{tikzpicture}
	}
	\caption{Relative $H^1$-errors (top) and $L^2$-errors (bottom) at final time $T$ for the semi-explicit method and implicit approximations with different maximal Picard steps (1,2, or 10) for fixed $h = 2^{-8}$ and varying $\tau$. 	}
	\label{fig:ex3_t_L2}
\end{figure}

For our second example, we consider a Kozeny-Carman permeability as explained in Example~\ref{exp:dilatationDependentPermeability}. We set the involved coefficients as 
\begin{equation*}
\rho_0 = 0.5,\quad c_s = -0.75,\quad C_s = 0.75,\quad \alpha = \lambda = \mu = \kappa_0 = \nu = M = T = 1. 
\end{equation*}
The right-hand sides $f$, $g$ and the initial data $p^0$ are chosen such that
\begin{equation*}
p(x,t) = t\,\sin(\pi x_1)\,\sin(\pi x_2),\quad 
u(x,t) = \frac 16\,\big[1\ \ 1\big]^T\exp(-t)\,\sin(\pi x_1)\,\sin(\pi x_2)
\end{equation*}
is the corresponding exact solution. In this case, we have $C_d^2 = \alpha^2 = 1$ and $c_a\,c_c = \mu/M = 1$ such that the coupling condition of \Cref{ass:weakCoupling} is only just fulfilled.  

This setup is taken from the example considered in~\cite{CaoCM13}, where only an implicit scheme in combination with a Picard-type iteration is used. Here, we compare our semi-explicit approach with three different implicit approaches, where either maximal one, two, or ten Picard iterations are performed for each time step. This means, the iteration either stops after the maximal amount of steps or when the relative residual tolerance of $10^{-9}$ is reached. 
Using the norms $\|\cdot\|_a$ and $\|\cdot\|_{\Q}$ for the variables $u$ and $p$, respectively, we present in Figure~\ref{fig:ex3_t_L2}~(top) the relative errors for these four methods and different values of $\tau$ for a fixed mesh size $h = 2^{-8}$. We observe a linear convergence behavior in $\tau$ for both $u$ and $p$, but the curves stagnate when the spatial error starts to dominate. The level of stagnation is similar for all curves due to the fact that the spatial errors behave similarly for these approaches as already observed in the first example (cf.~Figure~\ref{fig:ex1_h}). Note that the semi-explicit scheme is closest to the implicit approximation with just one Picard step. 

Apart from the behavior with respect to these stronger norms, we also present the errors in the $\cHV$-norm and $\cHQ$-norm, see Figure~\ref{fig:ex3_t_L2}~(bottom). Due to the smaller spatial error in these weaker norms, the error curves do not show a stagnation and display a linear convergence behavior in $\tau$. 

Finally, another important aspect with the semi-explicit approach is the speed-up due to the decoupling and the automatic linearization. To this end, we compare the run times of the semi-explicit scheme with the ones of the implicit Picard-type approaches with maximal one, two, or ten inner iteration steps. 
We emphasize that the semi-explicit scheme is significantly faster than any of the implicit schemes if the same time step $\tau$ is considered. However, as it can be observed in Figure~\ref{fig:ex3_t_L2}, the implicit schemes with two or more Picard steps lead to smaller errors (e.g., up to roughly a factor $20$ in $p$ and $100$ in u, respectively, for the implicit approach with 10 Picard steps). Therefore, we do not compare these methods using the same time steps but rather choose different $\tau$ such that the relative errors measured in the norm~$\vvvert(v,q)\vvvert^2 := \|v\|^2_{a} + \|q\|^2_{c}$ are comparable. 
	
We present the corresponding run times in Table~\ref{tab:runtimeT}. In this comparison, the semi-explicit scheme achieves the smallest error, still with faster run times compared to the other methods. Note that the computation time of the semi-explicit method can be heavily exploited for weaker coupling strengths. This can be observed in Table~\ref{tab:runtimeT2}, which shows the run times and errors for the same problem when $\mu = 10$ and $M = 0.1$ are chosen instead. In this setup, the coupling strength of the equations is weaker and therefore the speed-up more apparent. Note that this effect can also be observed in the following example, where the differences between the implicit and the semi-explicit scheme become smaller when the coupling gets weaker. 

\newcolumntype{P}{C{1.2cm}}
\newcolumntype{Q}{C{1.6cm}}
\begin{table}[t]
	\caption{Run time comparison (in seconds) for fixed $h=2^{-8}$ and the coefficients given in Section~\ref{ss:KozenyCarman}.}\label{tab:runtimeT}
	\begin{tabular}{lPQQ}
		 & $\tau$ & error & run time 
		\\\toprule[.05cm]
		semi-explicit & $2^{-6}$ & $0.00697$ & $663.86$   
		\\\midrule[.05cm]
		implicit (max.~10 Picard steps) & $2^{-1}$ & $0.00870$ & $761.63$ 
		\\\midrule
		implicit (max.~2 Picard steps) & $2^{-4}$ & $0.00724$ & $1334.73$
		\\\midrule
		implicit (1 Picard step) & $2^{-6}$ & $0.00823$ & $2836.90$
	\end{tabular}
\end{table}
\begin{table}[t]
	\caption{Run time comparison (in seconds) for fixed $h=2^{-8}$ and with adjusted coefficients~$\mu = 10$ and $M = 0.1$.}\label{tab:runtimeT2}
	\begin{tabular}{lPQQ}
		& $\tau$ & error & run time 
		\\\toprule[.05cm]
		semi-explicit & $2^{-4}$ & $0.00452$ & $170.16$  
		\\\midrule[.05cm]
		implicit (max.~10 Picard steps) & $2^{-1}$ & $0.00621$ & $495.52$ 
		\\\midrule
		implicit (max.~2 Picard steps) & $2^{-2}$ & $0.00524$ & $335.32$
		\\\midrule
		implicit (1 Picard step) & $2^{-6}$ & $0.00790$ & $3008.32$ 
	\end{tabular}
\end{table}
%
%
\subsection{Sharpness of the coupling condition}\label{ss:sharp}
The third and final example is devoted to showing that the weak coupling condition in Assumption~\ref{ass:weakCoupling} is indeed necessary and rather sharp. For this, we consider a simple poroelasticity test with coefficients
\begin{equation*}
	\lambda = \mu = \kappa_0 = \nu = M = T = 1. 
\end{equation*}
Moreover, we consider right-hand sides $f \equiv 0$, $g(x,t) = 5\,\cos(0.5\pi t) + \sin(0.5\pi t)$ and varying $\alpha$ to assess the stability of the semi-explicit discretization. Since $C_d^2 = \alpha^2$ and $c_a\,c_c = \mu/M = 1$, the coupling condition reads $\alpha \leq 1$. 
The quadratic nonlinear permeability in this example is given by 
\begin{equation*}
	\kappa(s) \coloneqq \left\{
	\begin{aligned}
	&\,\kappa_0\,c_s^2, \quad && \rho(s) \leq c_s,\\
	&\,\kappa_0\, (\rho(s))^2, \quad && c_s < \rho(s) < C_s,\\
	&\,\kappa_0\, C_s^2\quad && \rho(s) \geq C_s,
	\end{aligned}
	\right.
\end{equation*}
where $\rho(s) = \rho_0 + (1-\rho_0)s$, $\rho_0 = 0.4$, $c_s = 0.01$, and $C_s  = 0.75$. 

In Figure~\ref{fig:ex23}, we present the errors between the implicit and the semi-explicit discretization for~$h=2^{-4}$, different time step sizes $\tau \in \{2^{-2},2^{-3},2^{-4},2^{-5}\}$, and multiple coefficients~$\alpha \in [0.05,5]$. Note that the implicit scheme is unconditionally stable. 
We observe that for $\tau = 2^{-5}$, the semi-explicit scheme becomes unstable for $\alpha \approx 2$ and slightly later for the other time step sizes. This is generally in line with Assumption~\ref{ass:weakCoupling}, but it seems that, in practice, the condition may be slightly relaxed for larger $\tau$. 
\begin{figure}
	\scalebox{0.8}{
\begin{tikzpicture}

\begin{axis}[
width=5.0in,
height=3.0in,
legend cell align={left},
legend style={fill opacity=0.8, draw opacity=1, text opacity=1, at={(0.03,0.97)}, anchor=north west, draw=white!80!black},
log basis y={10},
tick align=outside,
tick pos=left,
x grid style={white!69.0196078431373!black},
xlabel={coupling coefficient $\alpha$},
xmin=-0.1975, xmax=5.2475,
xtick style={color=black},
y grid style={white!69.0196078431373!black},
ylabel={relative error},
y label style={at={(-.05,0.5)}},
ymin=0.0007, ymax=1000,
ymode=log,
ytick style={color=black}
]
\addplot [thick, red!80!black, dashed]
table {%
0.05 0.00424797156524939
0.1 0.00856152704128158
0.15 0.0129279935452944
0.2 0.0173346107983261
0.25 0.0217683609306665
0.3 0.0262157773438259
0.35 0.0306627381310963
0.4 0.0350942451919297
0.45 0.039494194886086
0.5 0.0438451499916614
0.55 0.0481281108350209
0.6 0.0523223075839025
0.65 0.0564050176097729
0.7 0.0603514257162957
0.75 0.0641345451336544
0.8 0.0677252240805711
0.85 0.0710922643212757
0.9 0.0742027049197727
0.95 0.0770223212887505
1 0.0795164353813363
1.05 0.0816511751937057
1.1 0.0833953864219358
1.15 0.0847235191711308
1.2 0.0856199647046557
1.25 0.0860855615091197
1.3 0.086147273077442
1.35 0.0858723186760925
1.4 0.0853880535888815
1.45 0.084907923614115
1.5 0.0847604972231046
1.55 0.0854110148224641
1.6 0.0874536103394992
1.65 0.0915469125206431
1.7 0.0982862513164621
1.75 0.108054991853197
1.8 0.120926392508902
1.85 0.136648364340351
1.9 0.154681086650475
1.95 0.17424360295175
2 0.19435377311307
2.05 0.21387155159001
2.1 0.231422272022827
2.15 0.2458327567349
2.2 0.257366311980512
2.25 0.264958887198115
2.3 0.270663435499776
2.35 0.275838951118672
2.4 0.281342781698751
2.45 0.285876000077321
2.5 0.290334536129354
2.55 0.294648863754165
2.6 0.299110520249699
2.65 0.303323842602447
2.7 0.307455281218402
2.75 0.311639212691827
2.8 0.315685532160223
2.85 0.319369436185869
2.9 0.322275147309103
2.95 0.323800055399111
3 0.322982610931806
3.05 0.321088390079993
3.1 0.318242758721031
3.15 0.314628402757523
3.2 0.310863510668897
3.25 0.306191475666274
3.3 0.299769771902704
3.35 0.292736032981056
3.4 0.285523315145575
3.45 0.278707370126173
3.5 0.27404993774347
3.55 0.277768877632481
3.6 0.303308268437033
3.65 0.377679453036626
3.7 0.545440649855966
3.75 0.847077749021127
3.8 1.15726400207486
3.85 1.32827480088826
3.9 1.42701949779336
3.95 1.47940253302614
4 1.53773717604509
4.05 1.74047445062115
4.1 1.91195116388434
4.15 2.33665357686434
4.2 2.80351013759025
4.25 2.99978202548749
4.3 3.23627814589042
4.35 3.46398561236363
4.4 4.19599449700434
4.45 4.92509281801437
4.5 5.58987129686923
4.55 5.86389539692647
4.6 6.73517754875996
4.65 7.79566605181011
4.7 8.5849890258345
4.75 10.156830334088
4.8 10.9762014535014
4.85 11.8716231627507
4.9 13.187860631849
4.95 14.4855460182103
5 15.876039645381
};
\addlegendentry{$\tau = 2^{-2}$}
\addplot [thick, green!50!black, dashed]
table {%
0.05 0.00284152329944412
0.1 0.00571275953445161
0.15 0.00861006963118017
0.2 0.0115293427908157
0.25 0.0144657174660678
0.3 0.017413291092075
0.35 0.0203648023470779
0.4 0.0233112915486331
0.45 0.0262417449984531
0.5 0.0291427285263951
0.55 0.0319980284822643
0.6 0.0347883105484767
0.65 0.0374908360356057
0.7 0.0400792650569314
0.75 0.0425236007251569
0.8 0.044790337939994
0.85 0.0468429121844355
0.9 0.0486425707683899
0.95 0.0501498732629683
1 0.0513271121092921
1.05 0.0521421520461156
1.1 0.0525744932554355
1.15 0.0526248549064882
1.2 0.0523302451640396
1.25 0.0517870542023994
1.3 0.0511841194596031
1.35 0.0508428264520367
1.4 0.0512466996513557
1.45 0.0530182157358613
1.5 0.0567946446851737
1.55 0.06302326247958
1.6 0.0718006742025583
1.65 0.0828538021799756
1.7 0.0956179944864936
1.75 0.109321083723879
1.8 0.123042199304666
1.85 0.135760326783077
1.9 0.146412675170629
1.95 0.153618301077474
2 0.157274763755626
2.05 0.159109532165235
2.1 0.161717051006155
2.15 0.164195359108706
2.2 0.166067335214991
2.25 0.167206961242866
2.3 0.167275959739676
2.35 0.166160497581927
2.4 0.163352570465121
2.45 0.158818367878359
2.5 0.152955059199407
2.55 0.147005018966972
2.6 0.142909426195081
2.65 0.143248896949644
2.7 0.148972480746646
2.75 0.163564001725396
2.8 0.209593482189491
2.85 0.355126861294615
2.9 0.716610034230612
2.95 0.855568802753357
3 1.28867329744222
3.05 1.98174004909804
3.1 3.34029804062177
3.15 4.6659671818402
3.2 7.32855249363575
3.25 10.5277914547649
3.3 13.8069460182212
3.35 16.939610453691
3.4 22.9165130627237
3.45 33.0228380840169
3.5 48.5011234001282
3.55 66.8470145228879
3.6 76.409296063065
3.65 98.7635020606304
3.7 133.574334847256
3.75 155.991153843827
3.8 191.768205024662
3.85 239.565564543834
3.9 304.915758896151
3.95 344.933960118188
4 392.456962091184
4.05 474.678512602428
4.1 552.941905062502
4.15 669.799087719819
4.2 812.9753882123
4.25 983.257057111418
4.3 1160.65385392035
4.35 1498.35456161245
4.4 1840.80863283582
4.45 2229.43629988041
4.5 2703.58110367121
4.55 3403.3767218991
4.6 4361.3710094259
4.65 5301.1026476722
4.7 6368.89314405735
4.75 7335.88825622567
4.8 8745.47794067015
4.85 10479.7739985495
4.9 12528.7078250976
4.95 14757.1265987442
5 17226.4613001155
};
\addlegendentry{$\tau = 2^{-3}$}
\addplot [thick, yellow!50!orange, dashed]
table {%
0.05 0.00162287334992688
0.1 0.00326133892984124
0.15 0.00491519911184054
0.2 0.00658368679444486
0.25 0.00826530234399977
0.3 0.0099576167454421
0.35 0.0116570561884713
0.4 0.013358641928181
0.45 0.0150557084155023
0.5 0.0167395855291996
0.55 0.0183992750634748
0.6 0.0200211300770007
0.65 0.0215885801455244
0.7 0.0230819344514471
0.75 0.0244783463612238
0.8 0.0257520157070187
0.85 0.0268747553775311
0.9 0.0278171006163189
0.95 0.0285502070296619
1 0.02904894780671
1.05 0.0292968640079916
1.1 0.0292940720999271
1.15 0.0290698278348141
1.2 0.028701972154615
1.25 0.0283444841502029
1.3 0.0282580496663298
1.35 0.0288212015266076
1.4 0.0304781418139214
1.45 0.0336003990574087
1.5 0.0383342609721127
1.55 0.0445484881464044
1.6 0.0518862272190412
1.65 0.0598389197884734
1.7 0.0677981060253338
1.75 0.0750909731492909
1.8 0.0810145901904475
1.85 0.0846959187859382
1.9 0.0855172579700863
1.95 0.0865601548197623
2 0.0873309126158238
2.05 0.0865587559191812
2.1 0.0821430937303295
2.15 0.0738973513537902
2.2 0.064231686921896
2.25 0.0829928350747624
2.3 0.182662296232188
2.35 0.399653106756266
2.4 0.912649935891672
2.45 2.74526325862423
2.5 7.74484912031276
2.55 17.9088705926604
2.6 40.7273968675336
2.65 78.8459508964203
2.7 150.420585214244
2.75 303.246234800156
2.8 542.766178050729
2.85 965.684837514984
2.9 1613.95514452279
2.95 2994.02300590123
3 5322.44852743343
3.05 9116.97150168554
3.1 15646.01911445
3.15 25338.3338942875
3.2 40723.9343454537
3.25 63604.8154567006
3.3 102734.29864641
3.35 164876.810146924
3.4 246883.129299862
3.45 396428.138840778
3.5 680133.263764437
3.55 1075139.81805827
3.6 1672889.1502474
3.65 2652374.24490928
3.7 4042333.67730017
3.75 6089233.24622494
3.8 9169832.30425987
3.85 13627756.5018545
3.9 19866340.5498504
3.95 28616147.8218445
4 41091066.0981717
4.05 58887807.4762312
4.1 85061054.3223914
4.15 110664199.368896
4.2 158417575.998209
4.25 227200632.092846
4.3 324113661.7483
4.35 460890596.09673
4.4 646622034.78367
4.45 903884195.617543
4.5 1271982020.92955
4.55 1730961483.97191
4.6 2414764656.67529
4.65 3277257197.99993
4.7 4546328623.63004
4.75 6435766463.4332
4.8 8906596401.03708
4.85 12296568595.5925
4.9 16911222479.404
4.95 23243570730.5717
5 31871560017.0175
};
\addlegendentry{$\tau = 2^{-4}$}
\addplot [thick, blue!70!black, dashed]
table {%
0.05 0.00086458980499611
0.1 0.00173754135178292
0.15 0.00261929729976109
0.2 0.00350989832272686
0.25 0.00440890364232175
0.3 0.00531528366705542
0.35 0.00622729317882887
0.4 0.00714230766485116
0.45 0.00805664974561904
0.5 0.0089653769311486
0.55 0.00986205806589999
0.6 0.010738554623026
0.65 0.011584814329509
0.7 0.0123887326419308
0.75 0.0131361275266625
0.8 0.0138109031645053
0.85 0.0143955062667384
0.9 0.0148718288834747
0.95 0.015222751177514
1 0.0154346799656963
1.05 0.0155016086180834
1.1 0.0154315808263076
1.15 0.0152568577896424
1.2 0.0150490110838821
1.25 0.0149377517323346
1.3 0.0151236355293276
1.35 0.0158596426304011
1.4 0.0173781006883296
1.45 0.0197920014385309
1.5 0.0230472735437366
1.55 0.0269474818152735
1.6 0.0312018936871214
1.65 0.0354620289025095
1.7 0.039347048396108
1.75 0.0424651323681412
1.8 0.0443408303080481
1.85 0.0444729701914462
1.9 0.0444846646226268
1.95 0.0409489844307186
2 0.0328098662350668
2.05 0.150485405740548
2.1 0.823303001851184
2.15 8.3180913346015
2.2 54.1361524628036
2.25 276.078162910412
2.3 1241.77289237822
2.35 5459.43271620183
2.4 21598.3604374785
2.45 74376.0732623602
2.5 290295.944939951
2.55 911797.20135388
2.6 3396060.02063393
2.65 10938063.9278668
2.7 30848440.1292234
2.75 96481439.5518829
2.8 330014812.565942
2.85 975495390.114833
2.9 2466320806.44068
2.95 6950951654.53055
3 16729156680.9291
3.05 46783446453.2398
3.1 148429111970.432
3.15 397392813253.416
3.2 1205882481660.46
3.25 2632294814938.96
3.3 6636191197975.73
3.35 16389889052069.9
3.4 32833625086783.6
3.45 79826041365031.2
3.5 192117323640648
3.55 401970117613898
3.6 949239402360552
3.65 2.21346422595942e+15
3.7 5.12163380788955e+15
3.75 1.33597103327869e+16
3.8 2.9944838992034e+16
3.85 6.64516753652689e+16
3.9 1.4612143762963e+17
3.95 3.18524692143405e+17
4 6.85073457726305e+17
4.05 1.8470156620053e+18
4.1 3.90403520764249e+18
4.15 8.08219432266842e+18
4.2 1.3337944836139e+19
4.25 2.69481128611955e+19
4.3 4.69656831017611e+19
4.35 9.5069888846928e+19
4.4 1.90802466604861e+20
4.45 3.80375049130403e+20
4.5 7.5425003834555e+20
4.55 1.47355631818065e+21
4.6 2.31597828921756e+21
4.65 4.48249120072762e+21
4.7 8.62922618482334e+21
4.75 1.65220551471311e+22
4.8 3.145267240629e+22
4.85 5.94457895141532e+22
4.9 1.11593960512978e+23
4.95 2.07675495273982e+23
5 3.58802963874268e+23
};
\addlegendentry{$\tau = 2^{-5}$}
\end{axis}

\end{tikzpicture}
	}
	\caption{Illustration of the practical coupling condition for the step sizes $\tau = 2^{-2}, \dots, 2^{-5}$. The plots contain the relative error in the $\vvvert\cdot\vvvert$-norm at the end of the time interval.	}
	\label{fig:ex23}
\end{figure}
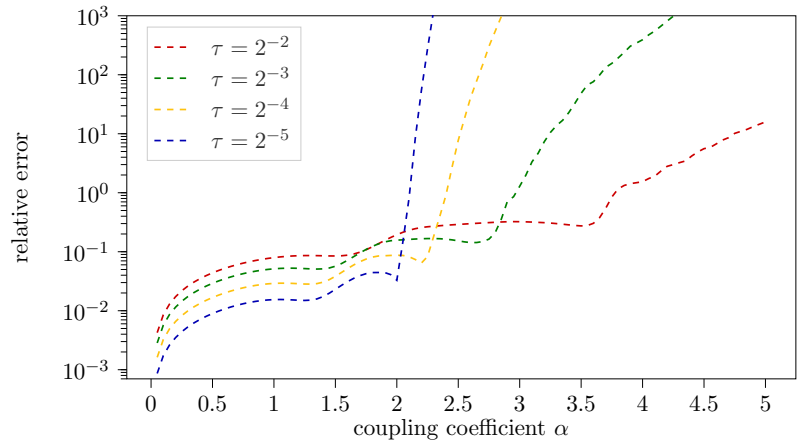
%
%
\subsection{Discussion of results}
In view of the results of the previous subsections, we can first record that the semi-explicit scheme shows the same order of convergence as an implicit one but -- if the coupling condition is barely fulfilled or slightly violated -- leads to larger errors for the same time step sizes. This, however, is outweighed by the speed-up that the semi-explicit scheme provides with its decoupling of the poroelastic equations. If the coupling between the equations is relatively weak, the semi-explicit scheme significantly outperforms the implicit ones, in general. Let us also emphasize that the semi-explicit discretization is very beneficial in a setting where $\tau$ and $h$ are refined simultaneously, since the spatial error seems to be rather dominant in such cases (cf.~Figure~\ref{fig:ex1_h}).

Finally, we mention that a coupling condition as in~\Cref{ass:weakCoupling} is indeed necessary when using the semi-explicit scheme as investigated in Section~\ref{ss:sharp}, but the condition appears to be less critical in practice. 
%
%
\section{Conclusions}
Within this paper, we have proposed the use of a semi-explicit discretization scheme for the problem of nonlinear poroelasticity with displacement-dependent permeability. In the setting of a weak coupling of the two involved equations, the scheme allows a decoupling and, in particular, results directly in a one-step linearization approach to treat the nonlinearity. We haven proven first-order convergence in time and illustrated the performance of the approach in multiple numerical examples. The semi-explicit method provides a speed-up compared to a classical implicit scheme with an inner iteration, especially for a~relatively weak coupling of the equations.
%
%
\section*{Acknowledgments} 
R.~Altmann acknowledges the support of the Deutsche Forschungsgemeinschaft (DFG, German Research Foundation) through the project 467107679. 
R.~Maier gratefully acknowledges support by the G\"oran Gustafsson Foundation for Research in Natural Sciences and Medicine. %
%
%
\newcommand{\etalchar}[1]{$^{#1}$}

\end{document}